\renewcommand{\P}{\mathbb{P}}
\newcommand{\E}{\mathbb{E}}
\newcommand{\R}{\mathbb{R}}
\DeclareFixedFont{\beaupetit}{T1}{ftp}{b}{n}{2cm}
\newcommand{\Ind}[1]{\mathds{1}_{#1}}%Indicatrice
\newtheorem{theorem}{Theorem}[]
\newtheorem{proposition}[]{Proposition}
\newtheorem{lemma}[]{Lemma}
\theoremstyle{definition}
\newtheorem*{remark}{Remark}
\begin{document}

\begin{frontmatter}[classification=text]
%% EDITOR: this will force the keywords to appear right after the Abstract.
%%   If the abstract is too long and would force the keywords off the
%%   front page, please comment out % [classification=text] above
%%   This way the keywords will be floated on the bottom of the first page
%%   even though the Abstract spills over to the next page.

%%% AUTHOR: Title goes here.  This line is optional.  You must use it
%%   if title has footnote attached or requires nontrivial typesetting,
%%   e.g., inclusion of linebreaks to force nice layout.
\title{The Phase Transition for Parking on Galton--Watson Trees} %% please capitalize all significant words

%%% AUTHOR:
%%% List all authors. If you wish, place grant acknowledgements in \thanks.
%%% In brackets include a short tag for each author.
\author[nicolas]{Nicolas Curien \thanks{Supported by ERC Advanced Grant 740943 ``GeoBrown''. }}
\author[olivier]{Olivier H\'enard}

%%% AUTHOR: Abstract goes here
\begin{abstract}
We establish a phase transition for the parking process on critical Galton--Watson trees. In this model, a random number of cars with mean $m$ and variance $\sigma^{2}$ arrive independently on the vertices of a critical Galton--Watson tree with finite variance $\Sigma^{2}$ conditioned to be large. The cars go down the tree towards the root and try to park on empty vertices as soon as possible. We show a phase transition depending on 
$$ \Theta:= (1-m)^2- \Sigma^2 (\sigma^2+m^2-m).$$ 
Specifically, when $m \leq 1$, if $ \Theta>0,$ then all but (possibly) a few cars will manage to park, whereas if $\Theta<0$,
then a positive fraction of the cars will not find a spot and exit the tree through the root. This confirms a conjecture of Goldschmidt and Przykucki \cite{GP19}.
\end{abstract}
\end{frontmatter}

%%% AUTHOR: body of paper starts here
%\section{Introduction}
 \begin{figure}[!h]
 \begin{center}
 \includegraphics[width=0.8\linewidth]{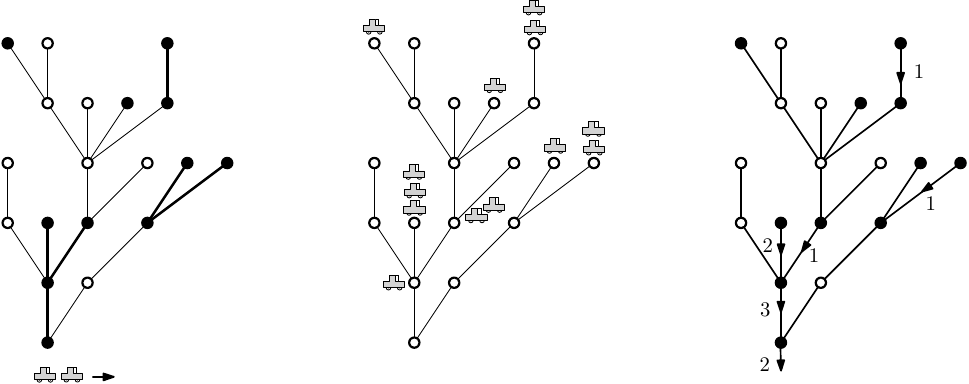}
 \caption{Middle: A plane tree together with a configuration of cars trying to park. Left: the resulting parking configuration, where $2$ cars did not manage to park on the tree. Right: the same parked tree with flux on the edges (all the non-labeled edges have flux zero).}
 \end{center}
 \end{figure}
 
\section{Introduction}
The parking process on the line is a very classical problem in probability and combinatorics.  Recently, a generalization of this process  on plane trees received much attention \cite{LaP16,GP19,CG19,JO18,HMP19}. In this paper, we shall see, in full generality, that this process displays a rich phase transition phenomenon (sharing many similarities with the usual phase transition for Bernoulli percolations on deterministic lattices) and we pinpoint the location of the phase transition which depends only on the means and variances of the car arrivals and on the critical offspring distribution of the underlying Galton--Watson trees, thereby confirming a conjecture of Goldschmidt and Przykucki.

\paragraph{Parking on a rooted plane tree.} We consider a finite plane\footnote{The planar embedding plays no role in the parking procedure; still, it is a convenient setting since it breaks annoying symmetries, enables one to define unambiguously Galton--Watson trees, and allows the use of a great many tools, e.g. spinal decompositions.}rooted tree $\mathfrak{t}$ whose vertices will be interpreted as free parking spots, each spot accommodating at most 1 car, together with a configuration $\ell :  \mathrm{Vertices}( \mathfrak{t}) \to \{0,1,2, \dots \}$ representing the number of cars arriving on each vertex. Each car tries to park on its arrival vertex, and if the spot is occupied, it travels downward towards the root of the tree until it finds an empty vertex to park. If there is no such vertex on its way, the car exits the tree through the root $ \varnothing$. The outgoing \emph{flux} $\varphi(\mathfrak{t}, \ell)$ is the number of cars which did not manage to park. Let us note two important properties of the model. First, the final configuration and the outgoing flux do not depend upon the order chosen to park the cars: we call it the Abelian property of the model.  Second, we have a monotonicity property: the outgoing flux is an increasing function of $\ell$ for a given tree $ \mathfrak{t}$.

Our stochastic model of parking is as follows. Given a (random) rooted plane tree $ \mathfrak{t}$, we shall suppose that the arrivals of cars on each vertex of $ \mathfrak{t}$ are independent identically distributed random variables with law 
\begin{align*}  \mu \quad \mbox{ with mean }m >0  \mbox{ and finite variance } \sigma^{2} &&&\mbox{ \textbf{(car arrivals)}}.  \end{align*} That is, conditionally on $ \mathfrak{t}$, the variables $(\ell(x))_{x\in \mathrm{Vertices}( \mathfrak{t})}$ are i.i.d.~with law $\mu$. By abuse of notation, {in the rest of this paper we shall always deal with trees with i.i.d.~labels and do not specify it further, e.g. we shall write $\varphi( \mathfrak{t}) \equiv \varphi( \mathfrak{t}, \ell)$ for the (random) outgoing flux of cars}. In what follows, the random tree $ \mathfrak{t}$ will be a version of a critical Bienaym\'e--Galton--Watson tree with offspring distribution 
 \begin{align*} \nu \quad \mbox{aperiodic\footnotemark{ }   with mean }1 \mbox{ and finite variance } \Sigma^{2} &&&\mbox{ \textbf{(offspring distribution)}} . \end{align*}

%\addtocounter{footnote}{-1}
\footnotetext{Aperiodic here means that the period of $\nu$ defined by $\max\{k \in \mathbb N ; \nu(k \mathbb N) = 1\}$ is equal to 1.}
%\stepcounter{footnote}

Specifically, we shall consider the parking process on three different types of random trees: the Galton--Watson tree $ \mathcal{T}$, its version $\mathcal{T}_{n}$ conditioned to have $n$ vertices, and the weak local limit $\mathcal{T}_{\infty}$ of the family $(\mathcal{T}_{n})_{n\geq 1}$, which one may also regard as the original Galton--Watson tree $\mathcal T$ conditioned to survive forever. Both distributions $\mu, \nu$ will be taken distinct from  $\delta_{1}$ without further notice.
 
 \paragraph{The phase transition.} Our main result establishes a sharp phase transition for the parking process on these random trees. To describe it, let us first focus on the case of $ \mathcal{T}_{n}$ for large $n$. Heuristically if the ``density" of cars is small enough, we expect that most of them can park on $ \mathcal{T}_{n}$ and the outgoing flux should be small : we can still have local conflicts near the root of the tree so some cars may not manage to park. %, depending only on a neighborhood of the root. 
 On the other hand, if there are ``too many" cars, then we expect that a positive fraction of the cars will not park, hence $\varphi( \mathcal{T}_{n})$ is asymptotically linear in $n$.
This is indeed the case: 

 \begin{theorem}[Phase transition for parking on Galton--Watson trees] \label{thm:main} If $m$ and $\sigma^{2}$ respectively are the mean and variance of $\mu$ (car arrivals),  and if  $\Sigma^{2}$ is the variance of the critical offspring distribution $\nu$, then we let $$\Theta:=(1-m)^2- \Sigma^2 (\sigma^2 +m^2-m).$$
Assuming $m \leq 1$, and $\mu, \nu \neq \delta_1$, we have three regimes classified as follows:
 \begin{center}
 \begin{tabular}{|c||l|c|c|c|}
 \hline
&  & subcritical  
& critical
  & supercritical \\
& & $ \Theta>0$ 
& $ \Theta=0 $ 
& $ \Theta<0 $\\
  \hline\hline
(i) &  $ \varphi( \mathcal{T}_{n})$ as $n \to \infty$ & $\mbox{converges in distribution}$  
& $ \xrightarrow[n\to\infty]{ (\mathbb{P})}\infty, \mbox{ but is }o_{\P}(n)$ 
& $ \approx cn, \quad c>0$\\
\hline \hline
 % (ii) & $\Theta\equiv \Theta( m, \sigma, \Sigma)$ & $>0$ & $0$ & $<0$ \\   \hline       \hline
 (ii) &$ \Sigma^2 \E[\varphi (\mathcal{T})] + m-1$& $- \sqrt{\Theta}$ 
 & $0$ 
 & $\infty$\\
  \hline  
      \hline
(iii)  &$ \mathbb{P}( \mbox{a car is parked at } {\varnothing} \mbox{ in } \mathcal{T})$ & $m$ 
& $m$ 
& $m-c$  \\
\hline
\hline
 \end{tabular}
 \end{center}
where $c = c (\mu,\nu) \in (0,m)$ is a deterministic number.
 \end{theorem}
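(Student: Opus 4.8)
The plan is to reduce the whole table to the analysis of the outgoing flux $\varphi(\mathcal{T})$ of a single \emph{unconditioned} critical Galton--Watson tree, which satisfies a recursive distributional equation. Writing $L\sim\mu$ for the cars arriving at the root, $K\sim\nu$ for its number of children, and $(\varphi_i)_{i\geq 1}$ for the i.i.d.\ fluxes emanating from the independent subtrees rooted at these children, the Abelian property gives, since exactly one car may park at the root,
\begin{align*}
\varphi(\mathcal{T})\;\overset{d}{=}\;\Big(L+\sum_{i=1}^{K}\varphi_i-1\Big)^{+}.
\end{align*}
I would realize $\varphi(\mathcal{T})$ as the increasing limit of the fluxes $\varphi^{(n)}$ produced by the tree truncated at height $n$: this identifies it as the \emph{minimal} solution and legitimizes the monotone limits used below. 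Setting $A:=L+\sum_{i\leq K}\varphi_i$ and $p:=\P(A\geq 1)=\P(\varnothing\text{ occupied})$, the generating function $g(s):=\E[s^{\varphi(\mathcal{T})}]$ then solves the single functional equation
\begin{align*}
s\,g(s)\;=\;f_\mu(s)\,f_\nu\big(g(s)\big)+(1-p)(s-1),
\end{align*}
with $f_\mu,f_\nu$ the generating functions of $\mu,\nu$; this identity drives everything.

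The next step extracts the mean. As $A$ is integer-valued, $(A-1)^+=A-\Ind{A\geq 1}$, so by criticality ($\E[K]=1$) one gets $\E[\varphi(\mathcal{T})]=\E[A]-p=m+\E[\varphi(\mathcal{T})]-p$; hence, \emph{whenever $\E[\varphi(\mathcal{T})]<\infty$}, necessarily $p=m$, which is line (iii) in the subcritical and critical columns. To locate the transition I would pass to second order: differentiating the functional equation twice at $s=1$ (the first derivative merely reproduces $p=m$, since $g'(1)$ cancels by criticality) produces the quadratic
\begin{align*}
\Sigma^2\,\E[\varphi(\mathcal{T})]^2-2(1-m)\,\E[\varphi(\mathcal{T})]+(\sigma^2+m^2-m)=0,
\end{align*}
whose discriminant is exactly $4\Theta$. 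The minimal-solution property selects the smaller root (justified through the monotone iteration on the truncated fluxes), whence $\Sigma^2\E[\varphi(\mathcal{T})]+m-1=-\sqrt{\Theta}$, giving line (ii) when $\Theta\geq 0$, with the double root at $\Theta=0$ returning the value $0$. Conversely, for $\Theta<0$ the quadratic has no real root, so no finite mean is possible and $\E[\varphi(\mathcal{T})]=\infty$, which is line (ii) supercritical.

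For the supercritical occupation probability I would revisit the truncated fluxes, for which $\E[\varphi^{(n+1)}]=\E[\varphi^{(n)}]+(m-1+r_n)$ with $r_n=\P(A^{(n)}=0)\to 1-p$; when $\Theta<0$ the divergence of $\E[\varphi(\mathcal{T})]$ must be reflected in a self-consistent value $p=m-c$ with $c>0$, i.e.\ $r>1-m$, which is line (iii) supercritical. Line (i) is then obtained by transferring to $\mathcal{T}_n$ through its local limit $\mathcal{T}_\infty$: along the spine, whose vertices carry the size-biased offspring law, the flux performs the reflected recursion $S_{h}=(S_{h+1}+B_{h+1}-1)^+$, where $B_{h+1}$ collects the root car and the fluxes of the $\hat K-1$ bushes, so that the step mean equals $m-1+\Sigma^2\E[\varphi(\mathcal{T})]=-\sqrt{\Theta}$. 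A negative drift ($\Theta>0$) makes $S$ positive recurrent and yields a finite limit law; a zero drift ($\Theta=0$) makes it null recurrent and yields divergence that is still $o(n)$; a positive (infinite) drift ($\Theta<0$) yields linear growth $\varphi(\mathcal{T}_n)\approx cn$, and the conservation identity $\varphi(\mathcal{T}_n)=\sum_x\ell(x)-\#\{\text{occupied}\}$ pins the slope to $c=m-p$, the very constant of line (iii).

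The main obstacle is the rigour of the second-order step at and near criticality. Already at $\Theta=0$ one expects $\E[\varphi(\mathcal{T})^2]=\infty$ even though $\E[\varphi(\mathcal{T})]<\infty$, so the formal double differentiation of the functional equation is not licensed; the quadratic, and the selection of its smaller root, must instead be secured by a Tauberian or singularity analysis of $g$ near $s=1$, or by a delicate monotone control of the truncated means $\E[\varphi^{(n)}]$ together with the sequence $r_n$. The same near-critical delicacy is exactly what separates the critical from the supercritical phase on the tree side: proving that $\varphi(\mathcal{T}_n)$ is genuinely $o(n)$ at $\Theta=0$ (rather than linear), establishing that $c>0$ strictly when $\Theta<0$, and identifying the exact slope, all demand quantitative local-limit estimates and a careful treatment of the null-recurrent spine walk that go well beyond the soft first-moment identities.
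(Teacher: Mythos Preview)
Your route via the recursive distributional equation and its generating function is genuinely different from the paper's. The paper never writes the RDE; instead it introduces a continuous \emph{time parameter} $t\in[0,1]$ by letting the cars on each vertex arrive at an independent uniform time, and studies $\varPhi(t)=\E[\varphi(\mathcal{T},L^{(t)})]$ under the thinned law $\mu_t=(1-t)\delta_0+t\mu$. Using the spinal decomposition for $\mathcal{T}$ together with Wald's identity for the hitting time of the spine walk, one obtains the closed integral equation
\[
\varPhi(t)=\int_0^t \frac{\tfrac12(\sigma^2+m^2-m)+m\,\varPhi(s)}{1-ms-\Sigma^2\,\varPhi(s)}\,\mathrm{d}s,
\]
whose explicit solution gives line~(ii), with blow-up exactly at $t_{\max}$. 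The crucial advantage is that this uses only \emph{first} moments of $\varphi$: the quantity $\sigma^2+m^2-m=\E[L(L-1)]$ enters through counting how many of the $L$ newly arrived cars overflow, not through $\E[\varphi(\mathcal{T})^2]$. This sidesteps precisely the obstacle you flag. Your quadratic in $\E[\varphi]$ is correct, but it is obtained only after cancelling $\E[\varphi^2]$ on both sides, and the paper itself records $\mathrm{Var}(\varphi(\mathcal{T}))<\infty$ as a \emph{conjecture} in the subcritical regime; so the cancellation is currently unjustified for all $\Theta\geq 0$, not only at criticality.

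Two further gaps deserve naming. First, your argument that $p<m$ strictly when $\Theta<0$ is not a proof: from $\E[\varphi^{(n+1)}]-\E[\varphi^{(n)}]=m-p^{(n+1)}$ one gets $m-p^{(n)}\downarrow m-p\geq 0$ and $\sum_n(m-p^{(n)})=\infty$, but this does not exclude $m-p=0$. The paper obtains the strict inequality again via the spine: for $s>t_{\max}$ the walk has positive (in fact infinite) drift, so $\P_0(T_{-1}^{(s)}<\infty)<1$, and the probability that a car dropped on $S_h$ ever parks at the root is strictly below~$1$. Second, for line~(i) subcritical you invoke $\mathcal{T}_n\to\mathcal{T}_\infty$, but $\varphi$ is not continuous for the local topology (a car contributing to $\varphi(\mathcal{T}_n)$ may originate arbitrarily high). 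The paper closes this with a dedicated locality lemma: it splits $(\mathcal{T}_n,\mathcal{V}_n)$ into $\mathrm{Top}/\mathrm{Up}/\mathrm{Down}$ pieces, uses an absolute-continuity control against $\mathcal{T}_\infty$ (Lemma~\ref{lem:rough}), and shows that with high probability enough free spots survive on the lower spine to absorb whatever flows down.
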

 \medskip

 Let us comment on our result. The first  line $(i)$ of the above table shows that there is indeed a phase transition for the outgoing flux $\varphi( \mathcal{T}_{n})$ as $n \to \infty$ :  the flux jumps from values of order $O_\P(1)$ to\footnote{the former means that the flux satisfies $\sup_{n} \P(\varphi( \mathcal{T}_{n}) \geq K ) \to 0$  as $K \to \infty$, and the latter that $\varphi(\mathcal{T}_n)/n$ converges to  $c$ in probability} $\approx cn$ in a small variation of the parameter $\Theta=\Theta(m, \sigma^2, \Sigma^2)$. The assumption $m \leq 1$ is not demanding since otherwise the model is clearly supercritical (there are typically more cars than parking spots !).  By $(ii)$ this transition also coincides with the moment where the mean flux at the root of $ \mathcal{T}$ jumps from a finite to an infinite value. The effect is even more dramatic (and easier to analyse) on the infinite tree $ \mathcal{T}_{\infty}$. By the classical spinal decomposition, see \cite[Chapter 12.1]{LP16}, $ \mathcal{T}_{\infty}$ is obtained by grafting independently on each vertex of a semi-infinite line a random number $Y-1$ of (unconditioned) Galton--Watson trees $ \mathcal{T}$   where $Y$ follows the size-biased distribution $\overline{\nu}_k := k \nu_k$ for $k \geq 1$. It follows that the law of $\varphi( \mathcal{T}_{\infty})$ is simply related with the one of the supremum of the random walk with i.i.d.~increments with law 
 $$ \sum_{i=1}^{Y-1} F_{i} + P-1,$$
where $ Y \sim \overline{\nu}$, $F_{i} \sim \varphi( \mathcal{T})$ and $P \sim \mu$ are all independent, see Equation \eqref{eq:supZ} for details. In particular, from line $(ii)$ of the previous table we see that this random walk has a negative drift in the subcritical regime (and so its supremum is finite), has zero mean in the critical regime, and infinite mean in the supercritical one (and so its supremum is infinite). The last line of the table is connected to the law of large numbers on $ \varphi ( \mathcal{T}_{n})$ via the quenched  convergence of the fringe subtree distribution established by Janson \cite{JA16}, see Lemma \ref{prop:janson-fringe}.

\begin{remark} It may appear as a ``little  miracle" that the location of the phase transition only depends on the first two moments and not on more complicated observables of the underlying distributions. This was indeed conjectured in \cite{GP19} using a non-rigorous variance analysis of $\varphi( \mathcal{T})$. For example, if $\sigma^2=\infty$ or simply if $\sigma^2\Sigma^2\geq 1$ then the model is supercritical regardless of the density $m>0$ of cars.
\end{remark}

\paragraph{Previous works.} To the best of our knowledge, the parking process on random trees was first studied by Lackner \& Panholzer \cite{LaP16} in the case of Poisson car arrivals on Cayley trees where they established a phase transition using involved analytic combinatorics techniques, see also \cite{P20}. This phase transition was further explained by Goldschmidt \& Przykucki \cite{GP19} using the infinite tree $ \mathcal{T}_\infty$. In \cite{GP19} the results are transfered from $ \mathcal{T}_\infty$ to $ \mathcal{T}_n$ using increasing couplings. Their arguments were later applied to the case of geometric plane trees (still with Poisson car arrivals) by Chen \& Goldschmidt \cite{CG19}. Motivated by a hydrological modeling problem, Jones \cite{JO18} independently considered the parking process on random trees in the case of binary arrivals on a binary tree. All these models are  encompassed by our general framework. Notice however that Lackner \& Panholzer \cite{LaP16} and Jones \cite{JO18} got some critical exponents in the critical case.
In a recent preprint \cite{CC20}, the uniform parking on a uniform Cayley tree (corresponding to $\mu$ and $\nu$ following a Poisson distribution) has been coupled with a variation of the Erd\"os--R\'enyi random graph. This enables the authors to study the number and sizes of the components in the critical window, but not their geometry.
We believe that the scaling limits of these components should be intimately connected to random growth-fragmentation processes introduced by  Bertoin, which appear in the study of random planar maps \cite{BCK18}. The parking process on trees is also related to the Derrida-Retaux model on supercritical Galton--Watson trees recently tackled in \cite{HMP19} and more generally to recursive $(\min,+)$ distributional equations, see \cite{AB05}.

\medskip 
Contrary to \cite{LaP16,CG19,GP19} which ultimately rely on some explicit computation, our method of proof in this paper is general and purely probabilistic. {It relies on classical tools in percolation theory such as differential (in)equalities obtained through increasing couplings combined with the use of many-to-one lemmas and  spinal decompositions of random trees  (see Eq.~\eqref{eq:spinal}).} %and absolute continuity relation between (parts of) $ \mathcal{T}_{n}$ and $ \mathcal{T}_{\infty}$, see Section \ref{sec:controls}. 
\medskip

\section{The different  trees and their relations}
\label{sec:controls}

We recall here the basic properties of the Galton--Watson tree $ \mathcal{T}$, its version $ \mathcal{T}_n$ conditioned to have $n$ vertices, and of Kesten's tree, the infinite Galton--Watson tree $ \mathcal{T}_\infty$ obtained as the local limit of $ \mathcal{T}_{n}$ as $n \to \infty$. We refer to \cite{AD14} for background on these objects.% Spinal decompositions and the absolute continuity relations between (parts of) these trees are at the core of our arguments.

\subsection{Parking on $ \mathcal{T}_\infty$ and a random walk}

We quickly recall the construction of Kesten's tree $ \mathcal{T}_\infty$. We denote by $\overline{ \nu}$ the size biased distribution of $\nu$ obtained by putting for $k \geq 0$
$$ \overline{\nu}_{k} = k \nu_{k}.$$
By criticality of $\nu$, this defines a probability distribution with expectation $ \Sigma^2+1$. The random tree $ \mathcal{T}_\infty$ is an infinite plane tree, obtained as follows: Start from a semi-infinite line $\{S_{0}, S_{1}, \dots\}$ of vertices rooted at $S_0$, called the \emph{spine}, and graft independently on each $S_{i}$ a random number $Y-1$ of independent $\nu$-Galton--Watson trees, see e.g. \cite{LG05} for a definition,
%\footnote{this probability measure on rooted plane trees gives probability $\prod_{x \in \mathfrak{t}} \nu(\text{deg}(x))$ to the finite rooted tree $\mathfrak{t}$, where $\text{deg}$ is the outdegree of $x$ in $\mathfrak{t}$ (with respect to the root).}
where $Y \sim \overline{\nu}$. To get a plane tree (i.e.~a tree with a planar embedding), independently for each vertex of the spine, consider a random uniform ordering of the children. See \cite{AD14} for details and Figure \ref{fig:kestentree} for an illustration. In particular the mean number of trees grafted on each vertex of the spine is $\sum_{k\geq 1} \overline{\nu}_{k} \cdot (k-1) = \Sigma^{2}$.

\begin{figure}[!h]
 \begin{center}
\includegraphics[width=\linewidth]{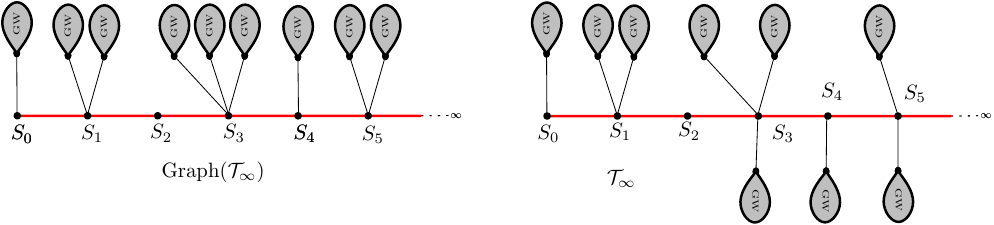}
 \caption{ \label{fig:kestentree}The construction of  $ \mathcal{T}_\infty$ from a spine and a random $\overline{\nu}-1$ number of  $\nu$-Galton--Watson trees grafted on each vertex. After assigning an order to the children of each vertex of the spine we obtain the plane tree $ \mathcal{T}_{\infty}$.}
 \end{center}
 \end{figure}
 
%Clearly, in the above construction, the tree $ \mathcal{T}_\infty$ comes with a distinguished infinite ray  $(S_0,S_1,S_2, \dots)$ corresponding to the genealogical line of the mutant particles. Notice that the number of trees branched to any vertex of this spine has law $\overline{\nu}-1$ and in particular it has mean $ \Sigma^2$.

\label{sec:infty}
The parking process is easy to analyse on $ \mathcal{T}_{\infty}$ as observed in \cite{GP19}. {To do this, we shall perform the parking process in two stages: we first (try to) park all the cars arriving in the subtrees grafted to the spine of $ \mathcal{T}_{\infty}$ and then park the remaining cars arriving on the spine of $ \mathcal{T}_{\infty}$. After performing the first stage, let us focus} on the number of incoming cars at $S_{h}$ the $h$-th vertex on the spine coming from the ``branches on the sides'' {(first stage of the parking process)} together with the possible cars arriving precisely at this vertex. By the description of $ \mathcal{T}_{\infty}$, these quantities are i.i.d.~and distributed according to the law of the random variable $Z$ defined by:
 \begin{eqnarray} Z= \sum_{i=1}^{ Y-1} F_{i} + P,   \label{eq:loiZ}\end{eqnarray}
where $ Y \sim \overline{\nu}$, $F_{i} \sim \varphi_{}( \mathcal{T})$, $P \sim \mu$ are all independent. In particular, the mean of $Z$ is equal to $ \Sigma^2 \mathbb{E}[\varphi( \mathcal{T})]+m$ which is the quantity appearing in Line (ii) of Theorem \ref{thm:main}. {Performing the second stage of the parking}, it is easy to see that the flux at the root can be written as 
\begin{eqnarray}
\varphi_{}( \mathcal{T}_{\infty}) = \sup_{h \geq 0} \big(Z_{0}+ \dots+ Z_{h}- (h+1)\big)\vee 0, 
\label{eq:supZ}
\end{eqnarray} 
where $Z_{i}$ are i.i.d.~copies of $Z$. This settles the case of the local limit easily: when $ \mathbb{E}[Z] \geq 1$ (which corresponds to the supercritical or critical case), the random walk $W$ with i.i.d.~increments with law $Z-1$ oscillates or drifts to $+\infty$\footnote{"oscillates" means $\limsup W_n= -\liminf W_n=+\infty$ whereas "drifts to infinity" means $\lim W_n=+\infty$, both properties holding almost surely} and the flux at the root of $ \mathcal{T}_{\infty}$ is infinite with probability $1$. When $ \mathbb{E}[Z]<1$ (which corresponds to the subcritical case), the random walk $W$ has a negative drift, and $ \varphi( \mathcal{T}_\infty)$ is almost surely finite.

\subsection{Spinal decomposition for $ \mathcal{T}$} If $ \mathfrak{t}$ is a plane tree given with a distinguished vertex $x \in \mathfrak{t}$, we denote by $ \mathrm{Top}( \mathfrak{t},x)$ the subtree of the descendants of $x$ and let $ \mathrm{Pruned}( \mathfrak{t},x)$ be the tree obtained from $ \mathfrak{t}$ by removing $ \mathrm{Top}( \mathfrak{t},x) \backslash \{x\}$, see Figure \ref{fig:prunedtrunk}. 
The spinal decomposition   for a \emph{critical} Galton--Watson tree $\mathcal{T}$  reads as follows: 
 \begin{eqnarray} \mathbb{E}\left[\sum_{x \in \mathcal{T}} F\Big(\mathrm{Pruned}(  \mathcal{T},x) , \mathrm{Top}( \mathcal{T},x) 	\Big)\right] 
  = \sum_{h \geq 0} \mathbb{E}\left[F\Big( \mathrm{Pruned}( \mathcal{T}_{\infty}, S_{h}) , \mathcal{T} 	\Big)\right], \label{eq:spinal}
 \end{eqnarray}
for any positive function $F$,  where in the last expectation $ \mathcal{T}_{\infty}$ and $ \mathcal{T}$ are independent. See \cite[Chapter 12.1]{LP16} 
%or \cite[Eq. (24)]{DU09} 
from which the statement is easily derived. We shall use the straightforward extension of this equation to trees decorated with i.i.d.~labels : in this extension, $\mathcal{T}$ on the LHS and $\mathcal{T}_{\infty}$ and $\mathcal{T}$ on the RHS are replaced by their labelled version, the labels being i.i.d.~random variables with law $\mu$. 

 \subsection{Comparisons between $ \mathcal{T}_n$ and $ \mathcal{T}_\infty$}
 \label{sec:decomposition}
 In the case of $ \mathcal{T}_n$, the spinal decomposition is more intricate but we will only need a rough control. 
 %no exact spinal decomposition  as \eqref{eq:spinal} holds.
%olivier : on peut toujours mettre F=F 1_{|T|=n} ca reste exact mais pas pratique; the use of a spinal decomposition is more intricate ? 
%However, if we restrict to a neighborhood of the spine then we can compare the model $ \mathcal{T}_n$ with $ \mathcal{T}_\infty$ with a good accuracy. 
To define a spine in  $\mathcal{T}_{n}$, conditionally on $ \mathcal{T}_{n}$ we sample a uniform vertex $ \mathcal{V}_{n} \in \mathcal{T}_{n}$. It is standard that the height $| \mathcal{V}_{n}|$ of $ \mathcal{V}_{n}$ converges once renormalized by $ \sqrt{n}$ towards a Rayleigh distribution, more precisely, we have the following local limit law established in \cite[Eq (12)]{KR18}
 \begin{eqnarray} \label{local:height} \sup_{ \varepsilon < t < 1/ \varepsilon} \sqrt{n} \left|\mathbb{P}\left( | \mathcal{V}_{n}| = \lfloor t\frac{ \sqrt{2n}}{ \Sigma}\rfloor \right) -  \frac{\Sigma}{ \sqrt{2n}}  \cdot 2t  \mathrm{e}^{-t^{2}/2}\right|  \xrightarrow[n\to\infty]{}0,  \end{eqnarray} for any $ \varepsilon>0$. To get a control on large parts of the tree $ \mathcal{T}_{n}$, we shall decompose it into three pieces. Recall the definition of $ \mathrm{Pruned}( \mathfrak{t},x)$ for a plane tree $ \mathfrak{t}$ with a distinguished vertex $x$. We shall further decompose $ \mathrm{Pruned}( \mathfrak{t},x)$ into two rooted plane trees carrying a distinguished vertex by considering $ \mathrm{Down}( \mathfrak{t},x) = \mathrm{Pruned}( \mathfrak{t}, y)$ where $y$ is the ancestor of $x$ at height\footnote{the height $|x|$ of a vertex $x$ in a rooted tree is defined as the number of edges along the unique non-intersecting path between that vertex and the root.} $\lfloor |x|/2\rfloor$. 
 We also set  $ \mathrm{Up}( \mathfrak{t},x) = \mathrm{Top}(\mathrm{Pruned}( \mathfrak{t},x), y)$.  See Figure \ref{fig:prunedtrunk} for an illustration. Each of these three trees is rooted at the unique vertex of its vertex set that has minimal height: $\mathrm{Top}( \mathfrak{t},x)$ is rooted at $x$,  $\mathrm{Up}( \mathfrak{t},x)$ at $y$ and $\mathrm{Down}( \mathfrak{t},x)$  at the root $\varnothing$ of $\mathfrak{t}$.
  \begin{figure}[!h]
  \begin{center}
  \includegraphics[width=0.7\linewidth]{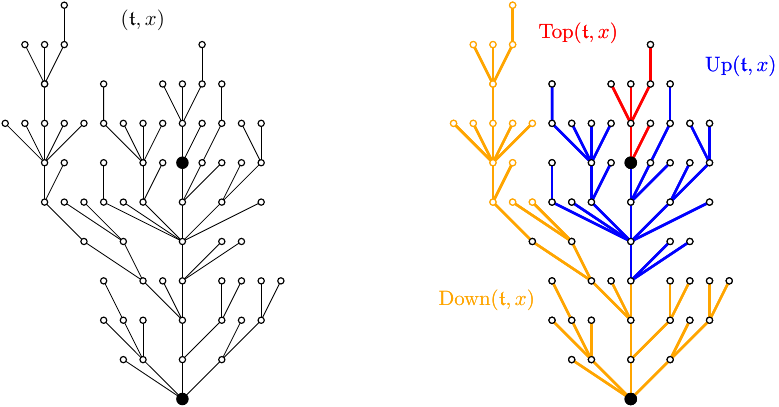}
  \caption{ \label{fig:prunedtrunk}The different pieces cut out of a plane tree $ \mathfrak{t}$ with a distinguished vertex $x$.}
  \end{center}
  \end{figure}
  
\begin{lemma}[Rough control] \label{lem:rough} Conditionally on $ \mathcal{T}_n$, let $ \mathcal{V}_n$ be a uniform vertex in $ \mathcal{T}_n$ whose height is denoted by  $H_{n}$ and let $ \mathcal{T}_{\infty}$ be independent of $( \mathcal{T}_{n}, \mathcal{V}_{n})$. % If $ \mathcal{T}_{\infty}$ is independent of $ (\mathcal{T}_{n}, \mathcal{V}_{n})$ then
   For every $ \varepsilon>0$, there exists $\delta >0$ and $n_{0} \geq 0$ such that for all $n \geq n_{0}$ and any event $A$
  $$ \mathbb{P}\left(\mathrm{Down}(  \mathcal{T}_{\infty}, S_{H_{n}}) \in A \right) \leq \delta \quad  \Longrightarrow \quad 
   \mathbb{P}\left(\mathrm{Down}(  \mathcal{T}_{n},  \mathcal{V}_{n}) \in A \right) \leq \varepsilon.$$
and:
$$ \mathbb{P}\left(\mathrm{Up}(\mathcal{T}_{\infty}, S_{H_{n}}) \in A \right) \leq \delta \quad  \Longrightarrow \quad \mathbb{P}\left(\mathrm{Up}(  \mathcal{T}_{n},  \mathcal{V}_{n}) \in A \right) \leq \varepsilon.$$
 \end{lemma}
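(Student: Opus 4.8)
The plan is to show that the law $\mathbb{Q}_n$ of $\mathrm{Down}(\mathcal{T}_n,\mathcal{V}_n)$ is absolutely continuous with respect to the law $\mathbb{P}_\infty$ of $\mathrm{Down}(\mathcal{T}_\infty,S_{H_n})$, with a Radon--Nikodym derivative that is bounded on an event of overwhelming $\mathbb{Q}_n$-probability; the stated implication then follows by splitting $A$ along this event. Writing $p_n=\mathbb{P}(|\mathcal{T}|=n)$ and using that $\mathcal{V}_n$ is uniform, I would start from the size-biasing identity $n p_n\,\mathbb{E}[G(\mathrm{Down}(\mathcal{T}_n,\mathcal{V}_n))]=\mathbb{E}\big[\sum_{x\in\mathcal{T}}G(\mathrm{Down}(\mathcal{T},x))\mathds{1}_{|\mathcal{T}|=n}\big]$ for a test function $G$. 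Since $\mathrm{Down}(\mathcal{T},x)=\mathrm{Pruned}(\mathcal{T},y)$ with $y$ the ancestor of $x$ at height $k=\lfloor|x|/2\rfloor$, I reindex the sum by $y$: for a fixed $y$ at height $k$, the vertices $x$ with $\lfloor|x|/2\rfloor=k$ whose height-$k$ ancestor is $y$ are exactly those of $\mathrm{Top}(\mathcal{T},y)$ at relative height $k$ or $k+1$. This turns the right-hand side into a sum over $y$ of a functional of the pair $(\mathrm{Pruned}(\mathcal{T},y),\mathrm{Top}(\mathcal{T},y))$, to which the spinal decomposition \eqref{eq:spinal} applies directly.

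Applying \eqref{eq:spinal} yields $n p_n\,\mathbb{E}[G(\mathrm{Down}(\mathcal{T}_n,\mathcal{V}_n))]=\sum_{k\ge0}\mathbb{E}\big[G(\mathrm{Pruned}(\mathcal{T}_\infty,S_k))\,\Psi\big(k,|\mathrm{Pruned}(\mathcal{T}_\infty,S_k)|\big)\big]$, where $\mathcal{T}_\infty$ and an independent copy $\mathcal{T}$ have been decoupled and where $\Psi(k,s):=\mathbb{E}\big[(Z_k+Z_{k+1})\mathds{1}_{|\mathcal{T}|=n-s+1}\big]$, with $Z_j$ the size of generation $j$ of $\mathcal{T}$. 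Because $\mathrm{Pruned}(\mathcal{T}_\infty,S_k)$ is precisely $\mathrm{Down}(\mathcal{T}_\infty,S_{H_n})$ on $\{\lfloor H_n/2\rfloor=k\}$ and is independent of $H_n$, this exhibits $\mathbb{Q}_n$ as a reweighting of $\mathbb{P}_\infty$ with density $D(k,s)=\Psi(k,s)\big/\big(n p_n\,\mathbb{P}(\lfloor H_n/2\rfloor=k)\big)$ depending only on the height $2k$ and the total size $s$ of the Down-tree: crucially, the combinatorial weight of the Down-tree itself cancels between the two sides.

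It then remains to bound $D$. For the denominator, the local limit law \eqref{local:height} together with the classical estimate $p_n\sim c\,n^{-3/2}$ gives $n p_n\,\mathbb{P}(\lfloor H_n/2\rfloor=k)\ge c'/n$ uniformly over $k$ in the bulk range $[\varepsilon'\sqrt n,\sqrt n/\varepsilon']$. For the numerator I would use the many-to-one identity $\mathbb{E}[Z_k\mathds{1}_{|\mathcal{T}|=m}]=\mathbb{P}(|\hat{\mathcal{T}}_k|=m)$, where $\hat{\mathcal{T}}_k$ is the size-biased tree with a spine of length $k$, and then the Otter--Dwass formula combined with the local central limit theorem for $\nu$: this expresses the probability as $\tfrac{N}{m}\mathbb{P}(S_m=m-N)$, with $N\asymp k\Sigma^2$ the number of bushes and $S_m$ a mean-$m$, variance-$m\Sigma^2$ walk, whose maximum over $m$ is of order $1/n$ when $k\asymp\sqrt n$ (the Gaussian factor $e^{-N^2/(2\Sigma^2 m)}$ killing the small-$m$ range and the concentration of $N$ around $k\Sigma^2$ handling its randomness). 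Hence $\Psi(k,s)\le C/n$ uniformly in $s$, giving $D\le C$ on the good set $G=\{k\in[\varepsilon'\sqrt n,\sqrt n/\varepsilon']\}$.

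Finally I would choose $\varepsilon'$ so that the weak convergence of $H_n/\sqrt n$ to a Rayleigh law makes $\mathbb{Q}_n(G^c)=\mathbb{P}(H_n/\sqrt n\notin[\varepsilon',1/\varepsilon'])<\varepsilon/2$ for $n\ge n_0$, and conclude via $\mathbb{Q}_n(A)\le C\,\mathbb{P}_\infty(A\cap G)+\mathbb{Q}_n(G^c)\le C\delta+\varepsilon/2$ with $\delta=\varepsilon/(2C)$. The statement for the $\mathrm{Up}$ parts is obtained in the same way: an analogous application of \eqref{eq:spinal}, this time pruning directly at $x$ and integrating out the Down-part below $S_{\lfloor h/2\rfloor}$, produces a density of the same shape, with the cut level $k=\lfloor h/2\rfloor$ now playing the role of the generation index, so the identical local-limit bounds apply. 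The main obstacle is the uniform-in-$m$ upper bound $\Psi(k,s)\le C/n$: controlling the joint local behaviour of the total size and the generation-$k$ size of a critical Galton--Watson tree across both the bulk ($m\asymp n$) and the deep tail ($m\ll n$) is the only genuinely delicate estimate, everything else being bookkeeping around \eqref{eq:spinal} and \eqref{local:height}.
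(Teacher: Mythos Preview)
Your strategy---exhibit the law of $\mathrm{Down}(\mathcal{T}_n,\mathcal{V}_n)$ as a reweighting of that of $\mathrm{Down}(\mathcal{T}_\infty,S_{H_n})$, bound the density on a good event, and absorb the complement---is exactly the paper's. The paper, however, takes a substantial shortcut that you miss.

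First, the paper conditions on $|\mathcal{V}_n|=h$ and computes the pointwise ratio
\[
\frac{\mathbb{P}(\mathrm{Down}(\mathcal{T}_n,\mathcal{V}_n)=\mathfrak{t}^\bullet\mid|\mathcal{V}_n|=h)}{\mathbb{P}(\mathrm{Down}(\mathcal{T}_\infty,S_h)=\mathfrak{t}^\bullet)}
=\frac{(n-n_0+1)\,p_{n-n_0+1}\,\mathbb{P}(|\mathcal{V}_{n-n_0+1}|=h-\lfloor h/2\rfloor)}{n\,p_n\,\mathbb{P}(|\mathcal{V}_n|=h)},
\]
where $n_0=|\mathfrak{t}^\bullet|$. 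The key observation is that the numerator has the \emph{same} structure as the denominator, so \eqref{local:height} applies to both and the ratio is bounded by a constant once $h\in[\alpha\sqrt n,\alpha^{-1}\sqrt n]$ \emph{and} $n_0\le(1-\alpha)n$. Your quantity $\Psi(k,s)$ is in fact nothing but $m p_m\big(\mathbb{P}(|\mathcal{V}_m|=k)+\mathbb{P}(|\mathcal{V}_m|=k+1)\big)$ with $m=n-s+1$ (write $\mathbb{E}[Z_k\mathds{1}_{|\mathcal{T}|=m}]=p_m\,\mathbb{E}[Z_k(\mathcal{T}_m)]=mp_m\,\mathbb{P}(|\mathcal{V}_m|=k)$), so the detour through Otter--Dwass and the local CLT is unnecessary.

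Second, and this is where your argument has a gap, the paper restricts the \emph{size} of the Down part as well as the height, whereas you aim for $\Psi(k,s)\le C/n$ uniformly in $s$. Your sketch for this uniform bound is not complete under the paper's hypotheses: controlling $\mathbb{E}_N\big[\tfrac{N}{m}\mathbb{P}(S_m=m-N)\big]$ for small $m$ via ``concentration of $N$ around $k\Sigma^2$'' asks for a second moment of $\bar\nu$, i.e.\ a \emph{third} moment of $\nu$, which is not assumed. The paper sidesteps this entirely: the additional good event $\{|\mathrm{Down}(\mathcal{T}_n,\mathcal{V}_n)|\le(1-\alpha)n\}$ has high probability by standard scaling-limit results for $(\mathcal{T}_n,\mathcal{V}_n)$, and on it one has $m\ge\alpha n$, so \eqref{local:height} applies directly to the numerator. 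If you add this size restriction to your good set $G$, your argument goes through without any Otter--Dwass input; as written, the ``only genuinely delicate estimate'' is not justified.
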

 \proof Fix a  tree $ \mathfrak{t}_0^{\bullet}$ with $n_{0}\leq n$ vertices and a distinguished leaf at height $\lfloor h/2\rfloor$. 
We claim that: 
\begin{equation}
\label{eq:Down}
\frac{\mathbb{P}(\mathrm{Down}(  \mathcal{T}_{n},  \mathcal{V}_{n})  = \mathfrak{t}_0^{\bullet} \mid | \mathcal{V}_{n}|=h)}{\mathbb{P}(\mathrm{Down}(  \mathcal{T}_{\infty},  S_{h})  = \mathfrak{t}_0^{\bullet})} = \frac{ (n-n_{0}+1) \cdot \mathbb{P}(| \mathcal{T}|=n-n_{0}+1) \mathbb{P}(| \mathcal{V}_{n-n_{0}+1}|=h-\lfloor h/2\rfloor)}{n \cdot \mathbb{P}(| \mathcal{T}|=n) \mathbb{P}(| \mathcal{V}_{n}|=h)}\cdot
\end{equation}
 To get the claim \eqref{eq:Down}, notice that for any non-negative measurable function $F$ on the set of rooted planar pointed trees, denoting by $\mathrm{GW}$ the $\nu$-Galton-Watson measure, given by $\mathrm{GW}(\mathfrak{t}) =\prod_{x \in \mathfrak{t}} \nu(\{\text{deg}(x)\})$ for a finite rooted tree $\mathfrak{t}$, where $\text{deg}$ is the outdegree  (with respect to the root) of $x$ in $\mathfrak{t}$, we have the equality
$$ \mathbb{E}[F(\mathcal{T}_{n},  \mathcal{V}_{n})] = \frac{1}{n} \frac{\sum_{v\in \mathfrak{t}} \Ind{|\mathfrak{t}|=n} F(\mathfrak{t},v) \mathrm{GW}(\mathfrak{t}) }{\mathbb{P}(|\mathcal T|=n)},$$   
 hence, conditioning further by the height of a random vertex $\mathcal{V}_{n}$, we obtain :
$$ \mathbb{E}[F(\mathcal{T}_{n},  \mathcal{V}_{n}) \mid |\mathcal{V}_{n}|=h ] = \frac{1}{n} \frac{\sum_{v\in \mathfrak{t}} \Ind{|\mathfrak{t}|=n, |v|=h} F(\mathfrak{t},v) \mathrm{GW}(\mathfrak{t}) }{\mathbb{P}(|\mathcal T|=n) \mathbb{P}(| \mathcal{V}_{n}|=h) }.$$   
Now, taking for $F$ the function $F=\Ind{\mathrm{Down}(\mathfrak{t},v)=\mathfrak{t}_0^{\bullet}}$, and using that a tree $\mathfrak{t}$ with $\mathrm{Down}(\mathfrak{t},v)= \mathfrak{t}_0^{\bullet}$ is the concatenation of $\mathfrak{t}_0^{\bullet}$ and a tree  $\mathfrak{t}_1$ grafted on it, and that the $\mathrm{GW}$ measure\footnote{We slightly abuse notation by writing 
$\mathrm{GW}(\mathfrak{t}_0^{\bullet})$ for the Galton-Watson measure of the corresponding unpointed tree.} of $\mathfrak{t}$ splits in a simple way, we obtain that the sum at the numerator takes the following form:
\begin{align*}
\sum_{v\in t} \Ind{|\mathfrak{t}|=n, |v|=h} F(\mathfrak{t},v) \mathrm{GW}(\mathfrak{t})
& =\sum_{\mathfrak{t}_1, v_1 \in \mathfrak{t}_1}  \Ind{|\mathfrak{t}_1|=n-n_0+1, |v_1|=h-\lfloor h/2\rfloor} \frac{\mathrm{GW}(\mathfrak{t}_0^{\bullet})}{\nu(\{0\})} \mathrm{GW}(\mathfrak{t}_1) \\
%& =\frac{\mathrm{GW}(\mathfrak{t}_0^{\bullet})}{\nu(\{0\})}  \sum_{\mathfrak{t_1}, v_1\in \mathfrak{t_1}}  \Ind{|\mathfrak{t}_1|=n-n_0+1, |v_1|=h-\lfloor h/2\rfloor}  \mathrm{GW}(\mathfrak{t}_1)\\
& =\frac{\mathrm{GW}(\mathfrak{t}_0^{\bullet}) }{\nu(\{0\})} (n-n_0+1) \mathbb{P}(|\mathcal T|=n-n_0+1) \mathbb{P}(|\mathcal V_{n-n_0+1}|=h-\lfloor h/2\rfloor)
\end{align*}  
whereas  by construction of the Kesten's tree, the denominator simply equals:
$$\mathbb{P}(\mathrm{Down}(  \mathcal{T}_{\infty},  S_{h})  = \mathfrak{t}_0^{\bullet}) = \frac{\mathrm{GW}(\mathfrak{t}_0^{\bullet}) }{\nu(\{0\})},$$
%$$\mathbb{P}(\mathrm{Down}(  \mathcal{T}_{\infty},  S_{h})  = \mathfrak{t}_0^{\bullet}) = \mathrm{GW}(\mathfrak{t}_0^{\bullet}) ,$$
see equation (12.1) in \cite{LP16},
which cancels out the same term at the numerator, giving the claim \eqref{eq:Down}.
Using \eqref{local:height} now, the right hand side of \eqref{eq:Down} is bounded by some absolute constant $C_{\alpha}>0$ as long as $ \alpha \sqrt{n} \leq h \leq \alpha^{-1} \sqrt{n}$ and $n_{0} \leq (1- \alpha) n$. We deduce that for any event $A$ we have 
 \begin{eqnarray*} \mathbb{P}(\mathrm{Down}(  \mathcal{T}_{n},  \mathcal{V}_{n}) \in A) &\leq& C_{\alpha}   \mathbb{P}(\mathrm{Down}(  \mathcal{T}_{\infty},  S_{H_{n}}) \in A)\\ &&  + \mathbb{P}( H_{n}/\sqrt{n} \notin [ \alpha, \alpha^{-1}]  \mbox{ \ \ or \ \ } |\mathrm{Down}(  \mathcal{T}_{n},  \mathcal{V}_{n})| \geq (1- \alpha)n).  \end{eqnarray*}
Using standard scaling limit results for $ (\mathcal{T}_{n}, \mathcal{V}_{n})$,  
%\cite[Theorem 1.6]{JA05} for $H_n$, 
the second probability in the right-hand side can be made smaller than $\varepsilon/2$  (for all $n$ large enough) by choosing $\alpha>0$ small enough. Putting $\delta =\varepsilon/(2 C_{ \alpha})$ we indeed deduce that $\mathbb{P}(\mathrm{Down}(  \mathcal{T}_{\infty},  S_{H_{n}}) \in A) \leq \delta$ implies $\mathbb{P}(\mathrm{Down}(  \mathcal{T}_{n},  \mathcal{V}_{n}) \in A)  \leq \varepsilon$ as desired.
The result for the $ \mathrm{Up}$ part can be deduced by symmetry. 
\qed

 \subsection{Fringe trees and a law of large numbers for the flux}
Given a plane tree $ \mathfrak{t}$, the fringe subtree distribution is the empirical measure 
$$ \mathrm{Fringe}(  \mathfrak{t}) =  \frac{1}{\# \mathfrak{t}} \sum_{x \in \mathfrak{t}} \delta_{ \mathrm{Top}(  \mathfrak{t},x)}.$$
A result of Janson  \cite[Theorem 1.3, Quenched version, Formula (1.11)]{JA16} states that $\mathrm{Fringe}( \mathcal{T}_{n})$ converges in probability (for the total variation distance) towards the distribution of the $\nu$-Galton--Watson measure. For our purposes, the definition of $ \mathrm{Fringe}$ is easily extended by taking care of the labeling $\ell : \mathcal{T}_{n} \to \mathbb{Z}_{\geq 0}$ and enables us to establish an ``abstract'' law of large numbers for the flux $\varphi( \mathcal{T}_{n})$:

\begin{lemma}(Weak law of large numbers for the flux in conditioned Galton--Watson trees)\label{prop:janson-fringe}. Recall that $m$ is the mean of $\mu$. 
The flux at the root of $ \mathcal{T}_{n}$ satisfies
\begin{eqnarray} \frac{\varphi_{}( \mathcal{T}_n)}{n} \quad \xrightarrow[n\to\infty]{( \mathbb{P})}  \quad m- \mathbb{P}(\mbox{a car is parked at } \varnothing \mbox{ in }  \mathcal{T} \mbox{ after parking}).  
\label{fringe}
\end{eqnarray}
\end{lemma}

\proof 

For a rooted labelled tree $\mathfrak t$, let $E(\mathfrak{t})$ be the event that a car is parked at the root of $\mathfrak{t}$ after parking, so that the quantity $\mathbb{P}(\mbox{a car is parked at } \varnothing \mbox{ in }  \mathcal{T} \mbox{ after parking})$ on the right-hand side of \eqref{fringe} corresponds to $\mathbb{P}( E(\mathcal T))$. Recall that  conditionally on $ \mathcal{T}_{n}$, the car arrivals $(L_x: x \in \mathcal{T}_{n})$ are i.i.d.~with law $\mu$. By the conservation of cars\footnote{This argument does not work in France on New Year's Eve where about $1000$ cars are burned.} we have  
\begin{align*} 
\frac{1}{n}
\left(\sum_{x \in \mathcal{T}_n} L_x  - \varphi( \mathcal{T}_{n})  \right)
 & = \frac{1}{n}  \sum_{x \in \mathcal{T}_n} \Ind{ E(\mathrm{Top}( \mathcal{T}_{n},x))} \\
&  = \int \Ind{E(\mathfrak{t})} \;  d \mathrm{Fringe} (\mathfrak{t}), \end{align*} 
and the convergence in probability of the Fringe probability measure entails that:
$$\int \Ind{E(\mathfrak{t})} \;  d \mathrm{Fringe} (\mathfrak{t}) \quad \xrightarrow[n\to\infty]{( \mathbb{P})}\quad  \mathbb{P}( E(\mathcal T)).$$
Since $\frac{1}{n}\sum_{x \in \mathcal{T}_n} L_x \to m$ in probability by the  law of large numbers, the desired result follows. \endproof

\section{$ \mathbb{E}[\varphi( \mathcal{T})]$ via spine decomposition and a differential equation}
In this section we compute $ \mathbb{E}[\varphi( \mathcal{T})]$ thus proving line $(ii)$ in Theorem \ref{thm:main}. This is done using a differential equation (more precisely its integral version) obtained, roughly speaking, by letting the cars arrive one-by-one and computing the marginal contribution to the flux using the spine decomposition \eqref{eq:spinal}. The same method is applied to estimate $\mathbb{P}( \varnothing \mbox{ contains a car in } \mathcal{T})$ and yields line $(iii)$ of Theorem \ref{thm:main} which in turn implies parts of line $(i)$ by Lemma \ref{prop:janson-fringe}.

\subsection{The mean flux $ \mathbb{E}[\varphi( \mathcal{T})]$}
Conditionally on $\mathcal T$, we define $( A_x, L_x)_{x \in \mathcal{T}}$ a collection of independent random variables distributed as $ \mathrm{Unif}[0,1] \otimes \mu$. The variable $ {A}_{x}$ will be thought of as ``the time of arrival'' of the $ {L}_{x}$ cars on the vertex $x$. This enables us to define an increasing labeling $L^{(t)} : \mathcal{T} \to \{0,1,2, \dots \}$ by setting 
 $$ L^{(t)} (x)= \mathbf{1}_{ {A}_{x} \leq t} \cdot {L}_{x} \quad \mbox{ with associated flux } \quad \varphi(t) := \varphi( \mathcal{T}, L^{(t)}).$$
 %olivier : L_x puis L^{(t)}(x) : mettre les deux en indice ou en arguments.
 Obviously, $ L^{(t)}$ is an i.i.d.~labeling of $ \mathcal{T}$ with law $\mu_t= (1-t) \delta_0 + t \mu$ with mean  $mt$. % and variance $\sigma^2 t +m^2 (t-t^2)$.   
In the following, we take profit of the arrival times $A_{x}$ to park the cars sequentially (which is allowed by the Abelian property of the model).  
 \begin{proposition}[Phase transition for the mean flux] \label{prop:meanflux} For $t \in [0,1]$ let  $  \varPhi(t) = \mathbb{E}[\varphi(t)]$ be the mean flux in $\mathcal{T}$ with car arrivals with law $\mu_t$. If $t_{\max}$ is the smallest positive solution of the equation $(1-mt)^2=t\Sigma^2 (\sigma^2 +m^2-m)$ (set $t_{\max}=+\infty$ in case no such solution exists), then 
  \begin{eqnarray} \varPhi(t) = \left\{\begin{array}{lcc}
  \displaystyle\frac{(1-mt) - \sqrt{(1-mt)^2- \Sigma^2 (\sigma^2 +m^2-m) t }}{\Sigma^2} & \mbox{ if }& t \leq t_{\max}\\
 +\infty & \mbox{ if } & t> t_{\max}. \end{array}\right. \label{eq:f(t)}  \end{eqnarray}
 \end{proposition}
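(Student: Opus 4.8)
The plan is to derive a closed equation for $\varPhi(t)=\E[\varphi(t)]$ by differentiating in $t$ and using the spine decomposition \eqref{eq:spinal}. The key idea is that as $t$ increases by $\mathrm dt$, a single new car arrives (in the Poissonian sense) at a uniformly chosen vertex $x\in\mathcal T$ with probability proportional to $m\,\mathrm dt$, and I must compute the expected marginal increase in the flux at the root caused by this extra car. By the Abelian property I may park all the pre-existing cars (those with $A_y\le t$) first, and only then send down the new car. The new car contributes $+1$ to the root flux precisely when, after the first parking stage, the entire path from $x$ down to the root $\varnothing$ is saturated (every spot occupied), so the extra car traverses the whole spine from $x$ and exits. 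Thus I first would write
\begin{eqnarray*}
\varPhi'(t) = m\cdot\E\Big[\sum_{x\in\mathcal T}\Ind{\text{path from }x\text{ to }\varnothing\text{ is saturated at time }t}\Big]/(\text{normalization}),
\end{eqnarray*}
more precisely expressing $\frac{\mathrm d}{\mathrm dt}\varPhi(t)$ as $m$ times the expected number of vertices $x$ whose downward path is fully occupied, which is exactly a quantity of the form $\E[\sum_{x\in\mathcal T}F(\mathrm{Pruned}(\mathcal T,x),\mathrm{Top}(\mathcal T,x))]$.

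Next I would apply the spine decomposition \eqref{eq:spinal} to convert this sum over vertices into a sum over the spine of $\mathcal T_\infty$. The event ``path from $S_h$ to the root is saturated'' on $\mathcal T_\infty$ should, after the first parking stage in the side branches, factor into a product/convolution structure governed by the i.i.d.\ incoming fluxes $Z_i$ of \eqref{eq:loiZ}. Concretely, the path is saturated when the partial sums of the $Z_i-1$ increments stay nonnegative, so the sum over $h\ge0$ of the saturation probabilities becomes a renewal-type quantity expressible through $\varPhi(t)$ itself, since the side-branch fluxes $F_i\sim\varphi(\mathcal T)$ at parameter $t$ have mean $\varPhi(t)$. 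After carrying out this computation, using $\E[Y-1]=\Sigma^2$ for the spinal offspring and the first two moments $m,\sigma^2$ of $\mu_t$ (noting $\E[L_x]=mt$ and $\E[L_x^2]=(\sigma^2+m^2)t$ for $\mu_t$), I expect the differential relation to close into an equation of the form $\varPhi'(t)=\Psi(\varPhi(t),t)$ whose integration yields a quadratic in $\varPhi$, namely $\Sigma^2\varPhi(t)^2-2(1-mt)\varPhi(t)+(\sigma^2+m^2-m)t=0$. Solving this quadratic and selecting the root that vanishes at $t=0$ gives exactly \eqref{eq:f(t)}, with $t_{\max}$ being the point where the discriminant $(1-mt)^2-\Sigma^2(\sigma^2+m^2-m)t$ hits zero.

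The two delicate points, which I expect to be the main obstacles, are the following. First, the differentiation step is really an integral/renewal identity rather than a naive derivative: I must justify the ``one extra car'' computation rigorously, i.e.\ that $\varPhi(t+\mathrm dt)-\varPhi(t)$ is, to first order, $m\,\mathrm dt$ times the expected number of saturated downward paths, and control the error terms (two cars arriving, or interactions with cars arriving above $x$). The cleanest way is to prove the integrated version directly, bounding the contribution of the rare double-arrival events. Second, and more seriously, one must handle the possibility that $\varPhi(t)=+\infty$: the spine sum and all the algebra are only valid while $\varPhi(t)<\infty$, so I would first establish finiteness and the validity of the quadratic on $[0,t_{\max})$, then argue by monotonicity and a blow-up/continuity argument that $\varPhi$ must diverge at $t_{\max}$ (the smaller root of the discriminant), matching the branch point of the square root. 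Verifying that the selected solution stays below the branch $\varPhi=(1-mt)/\Sigma^2$ and hence picks the minus sign is a routine check once the quadratic is in hand.
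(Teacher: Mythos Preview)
Your overall strategy --- differentiate in $t$, apply the spinal decomposition \eqref{eq:spinal} to the resulting sum over vertices, reduce to a first-passage quantity for the spine random walk, and integrate the resulting ODE into the quadratic $\Sigma^2\varPhi^2-2(1-mt)\varPhi+(\sigma^2+m^2-m)t=0$ --- is exactly the route the paper takes. However, the marginal computation you propose is not correct as stated, and this is not a higher-order error but a first-order effect.

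In the interpolation $\mu_t=(1-t)\delta_0+t\mu$, when the clock $A_x$ rings in $[t,t+\mathrm dt]$ (probability $\mathrm dt$), it is not ``one car'' that arrives at $x$ but the whole batch $L_x\sim\mu$. If after parking at time $t^-$ there are exactly $k\ge 0$ empty spots on the spine between $S_h$ and the root, then the contribution to the flux is $(L_x-k)_+$, not $L_x\cdot\Ind{k=0}$. In random-walk language (with $W^{(s)}$ the spine walk and $T^{(s)}_{-i}$ its hitting time of $-i$), the correct contribution is $\sum_{i=1}^{L}\Ind{T^{(s)}_{-i}>h}$, so that after summing on $h$ one obtains
\[
\sum_{h\ge 0}\E[I(s,h)]
=\E\Big[\sum_{i=1}^{L}\E_{W_0^{(s)}}[T^{(s)}_{-i}]\Big]
=\Big(m\,\varPhi(s)+\tfrac12(\sigma^2+m^2-m)\Big)\,\E_0[T^{(s)}_{-1}],
\]
where the term $\tfrac12(\sigma^2+m^2-m)=\tfrac12\E[L(L-1)]+ \tfrac{m}{2}\,(\cdots)$ arises precisely from the batch effect via $\E\big[\sum_{i=1}^L i\big]=\E[L(L+1)/2]$. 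Your proposed formula ``$m$ times the expected number of saturated paths'' yields only $m\,\E_{W_0^{(s)}}[T^{(s)}_{-1}]=m\varPhi(s)\,\E_0[T^{(s)}_{-1}]$, which misses the $\tfrac12(\sigma^2+m^2-m)$ term and therefore does \emph{not} integrate to the quadratic you wrote down. The ``two cars arriving'' phenomenon you flag as an error term is in fact leading order here; it would only be $O(\mathrm dt^2)$ under a genuine Poissonization where each car has its own independent arrival time, which is a different interpolation from $\mu_t$.

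A second point: $\varPhi$ does not diverge \emph{at} $t_{\max}$ --- the formula gives the finite value $\varPhi(t_{\max})=(1-mt_{\max})/\Sigma^2$ and it is only for $t>t_{\max}$ that $\varPhi=\infty$. The delicate issue is to exclude a premature blow-up at some $t_c<t_{\max}$, since the integral equation itself is only meaningful while the drift $1-ms-\Sigma^2\varPhi(s)$ stays positive. The paper resolves this by a truncation: letting $\varPhi_n(t)$ be the mean flux when only cars at height $\le n$ are allowed (so $\varPhi_n$ is finite and continuous with $\varPhi_n\uparrow\varPhi$), one gets the corresponding integral \emph{inequality} by monotonicity, deduces $\varPhi_n\le f$ on $[0,t_{\max}]$ for every $n$, and passes to the limit. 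Your sketch (``monotonicity and a blow-up/continuity argument'') is pointing in the right direction but does not yet supply this mechanism.
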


\begin{proof}[Proof of Line $(ii)$ of Theorem \ref{thm:main}]
%Line $(ii)$ of Theorem \ref{thm:main} follows, noting
It remains to justify the following alternative characterization of the three regimes described in Theorem \ref{thm:main} by mean of the parameter  $t_{\max} \in ]0,+\infty]$ : $t_{\max}<1$ in the supercritical regime $\Theta<0$, $t_{\max} =1$ in the critical regime $\Theta=0$ and $t_{\max}>1$ in the subcritical regime $\Theta>0$. To check these claims, observe that the function 
$$t \mapsto (1-mt)^2- t\Sigma^2 (\sigma^2 +m^2-m)$$
is decreasing on $[0,1/m]$ (hence on $[0,1]$ since we assumed $m\le 1$), as the sum of a decreasing function on $[0;1/m]$, $t \mapsto (1-mt)^2$, and a non increasing-one on $\R$ (the coefficient $\sigma^2 +m^2-m = \E[L(L-1)]$ is non-negative). \end{proof}

\begin{proof}[Proof of Proposition \ref{prop:meanflux} ]
Let $t \in [0,1]$ and write 
 \begin{align*}
\varPhi(t)& = \E\Big[ \sum_{x \in \mathcal{T}} I^{x}(A_x) \Ind{0 \leq A_x \leq t } \Big]  \end{align*}
 where $I^{x}(s)$ is the number $\in \{0,1, \dots , L_{x}\}$  of cars that arrived at time $s$ on the vertex $x$ which contribute to $\varphi(t)$, i.e.~those that did not manage to park at their arrival time $s \leq t$. Integrating over the value $s=A_x$ and using the spinal decomposition \eqref{eq:spinal} -- more precisely its easy extension to decorated Galton--Watson trees -- we can write the previous display as 
 \begin{align}
\label{eq:spinal1} \varPhi(t) &  =    
\int_{0}^{t} \mathrm{d}s \ \E\Big[ \sum_{x \in \mathcal{T}} I^{x}(s) \Big]  \underset{ \eqref{eq:spinal}}{=}   \int_{0}^{t} \mathrm{d}s \ \sum_{h =0}^{\infty}\mathbb{E}[ I(s,h)]  \end{align}
 where $I(s,h)$ is obtained as follows: For $h \geq 0$ define a tree $ \mathcal{T}(h)$ by grafting an independent copy of $ \mathcal{T}$ on top of $ \mathrm{Pruned}( \mathcal{T}_{\infty}, S_{h})$. This tree is decorated by letting i.i.d.~car arrivals with law $\mu_{s}$ \emph{except} on the vertex $S_{h}$ where we put an independent number of cars distributed as $\mu$. Then $I(s,h)$ is the number of those cars arriving on $ S_{h}$ that do not manage to park after having parked all other cars of $ \mathcal{T}(h)$. See Figure~\ref{fig:explained} for an illustration. 
 
 \begin{figure}[!h]
  \begin{center}
  \includegraphics[width=0.7\linewidth]{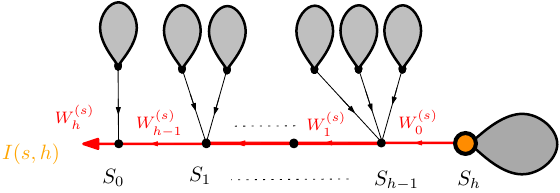}
  \caption{ \label{fig:explained}Definition of the variable $ I(s,h)$: add on top of $ \mathrm{Pruned}(  \mathcal{T}_{\infty}, S_{h})$ an independent unconditioned Galton--Watson $ \mathcal{T}$. Label all the vertices except $S_{h}$ of the resulting tree with i.i.d.~car arrivals with law $\mu_{s}$ and let them park. The variable $I(s,h)$ is then the number of cars among an independent number $\sim\mu$ of cars that we add on $S_{h}$ that do not manage to park.}
  \end{center}
  \end{figure}
 To compute $\mathbb{E}[ I(s,h)]$ we proceed as in Section \ref{sec:infty} and notice that at time $s_{-}$ the collection of the outgoing fluxes from the vertices $S_{h-i}$ before parking the cars arriving on $S_{h}$ defines when $i$ runs though the set $\{0, \ldots, h\}$ a random walk $( W^{(s)}_{i} : 0 \leq i \leq h)$ of length $h$ with i.i.d.~increments with law $Z^{(s)}-1$, where $Z^{(s)}$ is defined as in \eqref{eq:loiZ} by
 \begin{eqnarray*} Z^{(s)}= \sum_{i=1}^{ Y-1} F^{(s)}_{i} + P^{(s)},  \end{eqnarray*}
where $ Y \sim \overline{\nu}$, $F_{i} \sim \varphi_{}( \mathcal{T}, L^{(s)})$, $P \sim \mu_{s}$ are all independent. Besides, the starting point $W_{0}^{{(s)}}$  is distributed as the sum of $\nu$ independent copies of a random variable with law $ \varphi( \mathcal{T}, L^{(s)})$ (and is independent of the increments of the walk) minus $1$. Write $T^{(s)}_{-i}$ for the hitting time of $-i \leq 0$ by this left-continuous random walk. Assume all the cars whose arrival vertex is distinct from $S_h$ have been parked, and consider the $i$-th car arrived on vertex $S_h$. This car contributes to the flow at the root iff $\{T_{-i}^{(s)}>h\}$. Summing over the $L \sim \mu$ cars that arrive at vertex $S_h$, we find the representation: $I(s,h)=\sum_{i=1}^L \Ind{\{T_{-i}^{(s)}>h\}}$. Hence, writing $  \mathbb{P}_{x}$ for the law of the walk $(W_i^{(s)})_{i \geq 0}$ started at $x$, we obtain:
$$
 \mathbb{E}[I(s,h)]  =  \mathbb{E}\left[ \sum_{i=1}^{L} \P_{W_{0}^{{(s)}}}(T^{(s)}_{-i} > h)\right],$$
 where $L \sim \mu$ is independent of the walk $W^{(s)}$.  Performing the sum on $h$ we get 

\begin{align}
\sum_{h=0}^{\infty}\mathbb{E}[I(s,h)] 
& = \sum_{h =0}^{\infty} \E\Big[ \sum_{i=1}^{L} \P_{W_0^{(s)}}(T_{-i}^{(s)} > h)\Big]  \nonumber \\
& = \E\Big[ \sum_{i=1}^{L} \E_{W_0^{(s)}}[T_{-i}^{(s)}]\Big] \nonumber \\
& = \E\Big[ \sum_{i=1}^{L} (\E[W_{0}^{(s)}]+i) \;\E_{0}[T_{-1}^{(s)}]\Big]\nonumber \\
& = \left( \E[L]\E[W_{0}^{(s)}] +\E\Big[\frac{L(L+1)}{2}\Big]\right) \E_0[T_{-1}^{(s)}]   \nonumber \\ 
& = \left( m \varPhi(s) + \frac{1}{2}(\sigma^2+m^2-m)\right) \E_0[T_{-1}^{(s)}]. \label{eq:wald1}
\end{align}

Furthermore, if $ \mathbb{E}[Z^{(s)}-1] \geq 0$, then the random walk $W^{(s)}$ has a positive (or zero) drift, so $ \E_0[T_{-1}^{(s)}]= \infty$. On the other hand, when $ \mathbb{E}[Z^{(s)}-1] < 0$ i.e.~if $1-ms -\Sigma^2 \varPhi(s)  > 0$ then the random walk $W^{(s)}$ has a strictly negative drift, and an application of Wald's lemma gives: 
$$\E_0[T^{(s)}_{-1}]=  \frac{1}{ \mathbb{E}[1-Z^{(s)}]}=\frac{1}{1-ms -\Sigma^2 \varPhi(s) }.$$
{Combining the previous displays, since $\varPhi$ is left-continuous (by monotone convergence),  we deduce that  $\varPhi$ satisfies the  integral equation:
$$ (\star)\qquad \varPhi(0) =0, \quad \varPhi(t) = \int_0^t \mathrm{d}s \left(\frac{ \frac{1}{2}(\sigma^2+m^2-m)+ m \varPhi(s)   }{1-ms -\Sigma^2 \varPhi(s)  }\right), \quad 0 \leq t \leq t_c$$ 
where $t_c = \inf\{ t \in [0,1] : 1-mt -\Sigma^2 \varPhi(t)  < 0\}$, and  $ \varPhi(t) = \infty$ for all $ t_{c} < t \leq 1$. It is easy\footnote{To derive the solution to the differential equation $(\star)$, observe that the derivative of the function from $[0,t_c)$ to $\R^+$ that maps $t$ to $(1-mt- \Sigma^2 \varPhi(t))^2$ is an affine function of $t$.}  to check that the function defined on the right-hand side of \eqref{eq:f(t)}, call it $f(t)$ for the time being, is a solution to $(\star)$ with $t_c=t_ \mathrm{max}$.\\
We will first prove that $\varPhi(t) \geq f(t)$ for all $t \in [0,1]$. To see this, notice that $\varPhi = f$ on $[0, t_c \wedge t_ \mathrm{max})$ since they satisfy the same well-posed differential equation. This also holds at $t_c\wedge t_{\max}$ by left-continuity of $\varPhi$ and $f$.  Since $\varPhi(t) = \infty$ for $t>t_{c}$ the statement follows.} 

We now prove that $\varPhi(t) \leq f(t)$ for all $t \in [0,1]$. The problem comes from the fact that $\varPhi$ may coincide with $f$ for small $t$ and then decide to ``explode'' to $+\infty$ at some $t_c < t_{ \mathrm{max}}$ before $f$ does so (this procedure in fact defines a family of solutions to $(\star)$ indexed by their jump time to $\infty$). To show that this cannot happen, we introduce $\varphi_{n}(t)$ the flux at the root for the tree $\mathcal T$ decorated with $\mu_t$-arrivals restricted to those vertices at distance at most $n$ from the root. We write $ \varPhi_n(t) = \mathbb{E}[\varphi_n(t)]$. Clearly by monotone convergence we have $\varPhi_n(t) \uparrow \varPhi(t)$ as $n \to \infty$ for any $t \geq 0$. We also have the  bound  $$ \varPhi_n(t) \leq  \mathbb{E}\Big[\sum_{ \begin{subarray}{c}	
x \in \mathcal{T}\\ |x|\leq n \end{subarray}} L^{(t)}(x)\Big] = m  t \cdot  \mathbb{E}\Big[\sum_{ \begin{subarray}{c}	
x \in \mathcal{T}\\ |x|\leq n \end{subarray}} 1\Big] =  m t n,$$
and dominated convergence ensures the map $ t  \mapsto \varPhi_n(t)$ is continuous on $\R^+$. Using the monotonicity of the parking process with respect to the labeling, one can repeat the argument yielding to \eqref{eq:spinal1} and \eqref{eq:wald1} and we claim that we get this time the inequality 
%justify inequality
$$\varPhi_{n}(t) \leq \int_{0}^{t}  \mathrm{d}s\,   \frac{ \frac{1}{2}(\sigma^2+m^2-m)+ m \varPhi_{n}(s)   }{1-ms -\Sigma^2 \varPhi_{n}(s)  }, $$
valid as long as $(1-mt) -\Sigma^2 \varPhi_{n}(t) \geq 0$: 
to wit, notice that one can represent the function $\varPhi_n$ as 
$ \varPhi_n(t)   =      \int_{0}^{t} \mathrm{d}s \ \sum_{h =0}^{\infty}\mathbb{E}[ I_n(s,h)],$
where $I_n(s,h)$ is the number of those cars counted in $I(s,h)$ whose arrival vertex is at distance at most $n$ from the root of $ \mathcal{T}(h)$. But $I_n(s,h)$ is in turn bounded by  the number of those cars  counted in $I(s,h)$ whose arrival vertex is at distance at most $n$ from the set of vertices $\{S_0, \ldots, S_h\}$ of  $\mathcal{T}(h)$ (the so-called spine), see figure \ref{fig:explained}. 
Replacing $\varPhi$ by $\varPhi_n$, the rest of the equalities leading to  $(\star)$  still hold.

Using the continuity of $\varPhi_n$ and the previous display, it is an easy exercise to show that for every $n$ we have $ \varPhi_n \leq f$ on $[0, t_{ \max}]$ (including $t_{\max}$). Sending $n \to \infty$, we deduce that $\varPhi \leq f$ on $[0, t_{ \max}]$ as desired. \end{proof}

 \subsection{The probability the root of a Galton--Watson tree is parked}
 Recall the characterization of the phases using $t_{ \mathrm{max}}$ or $\Theta$. In the next proposition we control the probability, under $ \mathcal{T}$, that the root vertex contains a car.  This gives Line $(iii)$ of Theorem \ref{thm:main} and combined with Lemma \ref{prop:janson-fringe} shows that the flux is linear in the supercritical regime (Line $(i)$ right in Theorem \ref{thm:main}) and sublinear in the critical regime (Line $(i)$ middle). 
 \begin{proposition}\label{conj-GP} With the same notation as in Proposition \ref{prop:meanflux} we have:
\begin{eqnarray*}
 \mathbb{P}( \varnothing \mbox{ is parked in } \mathcal{T}) \left\{\begin{array}{ccl} \displaystyle =m & \mbox{ in the critical or subcritical case,} \\
 <m & \mbox{ in the supercritical case}.  \end{array} \right.
  \end{eqnarray*}
 \end{proposition}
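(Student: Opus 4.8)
The starting point is the branching structure of the flux at the root. Writing $N\sim\nu$ for the number of children of $\varnothing$ and $\mathcal{T}_1,\dots,\mathcal{T}_N$ for the subtrees hanging from them, the Abelian property gives
$$\varphi(\mathcal{T}) = \Big(L_\varnothing + \sum_{i=1}^N \varphi(\mathcal{T}_i) - 1\Big)^{+}, \qquad \{\varnothing \text{ is parked}\} = \Big\{L_\varnothing + \sum_{i=1}^N \varphi(\mathcal{T}_i) \geq 1\Big\}.$$
The plan is to work with the depth-$n$ truncations $\varphi_n$ and $\varPhi_n(1)=\mathbb{E}[\varphi_n]$ already introduced in the proof of Proposition \ref{prop:meanflux}, for which the same identity holds with $\varphi(\mathcal{T}_i)$ replaced by $\varphi_{n-1}(\mathcal{T}_i)$. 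Using $(x-1)^{+}=x-1+\mathbf{1}_{x=0}$ for integer $x$, taking expectations, and evaluating the sum by Wald together with the criticality $\mathbb{E}[N]=1$, one gets the exact relation
$$\varPhi_n(1) = m + \varPhi_{n-1}(1) - p_n, \qquad\text{equivalently}\qquad m - p_n = \varPhi_n(1) - \varPhi_{n-1}(1) \geq 0,$$
where $p_n := \mathbb{P}(\varnothing \text{ is parked in the depth-}n\text{ tree})$. Since the event that $\varnothing$ is parked is increasing both in the labels and in $n$, we have $p_n \uparrow p := \mathbb{P}(\varnothing \text{ is parked in } \mathcal{T})$.

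In the subcritical and critical cases $t_{\max}\geq 1$, so by Proposition \ref{prop:meanflux} we have $\varPhi(1)<\infty$ and $\varPhi_n(1)\uparrow \varPhi(1)$. The sequence $(\varPhi_n(1))_n$ is then Cauchy, hence its increments $\varPhi_n(1)-\varPhi_{n-1}(1)=m-p_n$ tend to $0$. Letting $n\to\infty$ yields $p=m$, which is the first line of the statement. (Equivalently, since $\varPhi(1)<\infty$ one may take expectations directly in the untruncated identity and cancel the finite quantity $\varPhi(1)$ from both sides.)

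In the supercritical case $t_{\max}<1$, Proposition \ref{prop:meanflux} gives $\varPhi(1)=\infty$, hence $\varPhi_n(1)\uparrow\infty$. Telescoping the relation above gives $\sum_{k=1}^{n}(m-p_k)=\varPhi_n(1)-\varPhi_0(1)\to\infty$, where $\varPhi_0(1)=\mathbb{E}[(L-1)^{+}]<\infty$. The nonnegative terms $m-p_k$ are nonincreasing (because $p_k$ is nondecreasing), so they converge to $c:=m-p\geq 0$, and by Cesàro averaging $\varPhi_n(1)/n\to c$. Consequently the desired conclusion $p<m$ is \emph{equivalent} to the linear lower bound $\lim_n \varPhi_n(1)/n>0$, i.e.\ to showing that the truncated mean flux grows at least linearly in the truncation depth.

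This linear growth is the main obstacle, and it is precisely the quantitative signature of supercriticality. I would establish it by exhibiting the positive drift that appears at finite truncation level: reasoning as in Section \ref{sec:infty} with the spine random walk truncated to depth $n$, the relevant step has law $Z^{[n]}-1$ with $\mathbb{E}[Z^{[n]}]=\Sigma^{2}\varPhi_{n-1}(1)+m$, which exceeds $1$ for all large $n$ since $\varPhi_{n-1}(1)\to\infty$. Once the drift is bounded away from $1$, a renewal/comparison argument should force the expected flux fed into the root to increase by a fixed positive amount at each additional level, giving $\varPhi_n(1)\geq \gamma n$ for some $\gamma>0$ and all large $n$, whence $c\geq\gamma>0$ and $p<m$. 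The delicate point is to make this comparison rigorous while the step distributions degenerate (note $\mathbb{E}[Z^{[n]}]\to\infty$), so that the truncated hitting-time expectations $\mathbb{E}_0[T^{[n]}_{-1}]$ must be controlled uniformly; the local controls of Lemma \ref{lem:rough} together with the monotonicity of the parking process are the natural tools to carry this out.
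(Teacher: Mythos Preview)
Your treatment of the subcritical and critical cases is correct and is a genuinely different (and more elementary) route than the paper's. The identity $m-p_n=\varPhi_n(1)-\varPhi_{n-1}(1)$, obtained from the root recursion $(x-1)^+=x-1+\mathbf{1}_{x=0}$ and Wald, is clean; since $\varPhi(1)<\infty$ when $t_{\max}\ge 1$, the increments vanish and $p_n\uparrow m$. The paper instead runs the spine decomposition: writing $p_t=\int_0^t\mathrm{d}s\sum_{h\ge0}\mathbb{P}(P(s,h))$, where $P(s,h)$ is the event that a car dropped at $S_h$ parks at the root, one gets $\sum_h\mathbb{P}(P(s,h))=\mathbb{E}\big[\sum_{i=1}^{L}\mathbb{P}_0(T^{(s)}_{-1}<\infty)^{W_0^{(s)}+i}\big]$, which equals $m$ exactly when the spine walk has nonpositive drift ($s\le t_{\max}$) and is strictly $<m$ otherwise. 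Your argument has the advantage of avoiding the spine machinery entirely in the easy regimes.

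The supercritical case, however, is not proved. You correctly reduce $p<m$ to the linear lower bound $\liminf_n\varPhi_n(1)/n>0$ via Ces\`aro, but the sketch that follows does not close the gap. Saying that the truncated spine step has mean $\Sigma^2\varPhi_{n-1}(1)+m>1$ for large $n$ does not by itself yield a uniform positive increment $\varPhi_n(1)-\varPhi_{n-1}(1)\ge\gamma$: the step distributions $Z^{[n]}$ change with $n$ and their means diverge, so ``the drift is bounded away from $1$'' gives no uniform control on the relevant hitting-time expectations. Moreover, Lemma \ref{lem:rough} compares $\mathcal{T}_n$ (conditioned on size) with $\mathcal{T}_\infty$; it says nothing about depth truncations of the \emph{unconditioned} tree $\mathcal{T}$ and is not the right tool here. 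As it stands, the supercritical half is an equivalence with an unproved statement.

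The paper's spine approach sidesteps this difficulty entirely: in the expression $\sum_h\mathbb{P}(P(s,h))$, as soon as $s>t_{\max}$ the walk $W^{(s)}$ has strictly positive drift, so $\mathbb{P}_0(T^{(s)}_{-1}<\infty)<1$, and the displayed expectation is strictly less than $\mathbb{E}[L]=m$; integrating over $s\in[0,1]$ immediately gives $p_1<m$. No growth estimate on $\varPhi_n$ is needed.
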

 \begin{proof} We use the same notation and  proceed as in the proof of Proposition \ref{prop:meanflux} where the cars arrive according to random times $A_{x}$ on the tree $ \mathcal{T}$. Putting $ p_{t}= \mathbb{P}(\varnothing \mbox{ contains a car  in } (\mathcal{T}, L^{(t)}))$ we have using the spine decomposition 
 \begin{align*}
p_{t} = \int_{0}^{t}  \mathrm{d}s\  \sum_{h \geq 0} \mathbb{P}( P(s,h)), \end{align*}
where $P(s,h)$ is the event that in the labeled tree described in Figure \ref{fig:explained}, one of the cars arriving on the vertex $S_{h}$ at time $s$ goes down the spine and manages to park on the empty root vertex $\varnothing$. With the same notation as in the display after Figure \ref{fig:explained} we have 
$ P(s,h) =   \bigcup_{i=1}^{L} \{T_{-i}^{(s)}=h\}$ under $\mathbb{P}_{W_{0}^{(s)}}$ and so performing the sum over $h$ we deduce
\begin{align*}
\sum_{h=0}^{\infty}\mathbb{P}( P(s,h)) &= \E\Big[\sum_{i=1}^{L}  \P_{W_{0}^{(s)}}(T^{(s)}_{-i}<\infty) \Big]  = \E\Big[\sum_{i=1}^{L}  \P_{0} \big(T^{(s)}_{-1}<\infty\big)^{W_{0}^{(s)}+i} \Big] \\  &= \left\{ \begin{array}{ccc} \mathbb{E}[L] =m & \mbox{ if }& s \leq t_{\max}\\
< m & \mbox{ if }& s >t_{\max}, \end{array} \right.
\end{align*}
where $t_{\max}$ is as in Proposition \ref{prop:meanflux}. The proposition follows by integration.
\end{proof}

\section{Remaining proofs}
We now perform the remaining proofs required for Theorem \ref{thm:main}, namely establishing that $\varphi( \mathcal{T}_n)$ converges in law in the subcritical case and diverges in the critical case, as it does in the infinite model $\mathcal T_{\infty}$.
Even though the tree $\mathcal T_{\infty}$ is the local limit of $\mathcal T_{n}$, the flux is not continuous in the local topology and so transposing the properties from one model to the other requires some extra care. \medskip

Since $ \mathcal{T}_n \to \mathcal{T}_\infty$ in distribution in the local sense as $n \to \infty$, we will suppose in this 
section by Skorokhod embedding theorem that this convergence holds almost surely, also taking into account the i.i.d.~car arrivals on those trees. We will show that 
 \begin{eqnarray} \label{eq:goalsub} \varphi(\mathcal{T}_n) \xrightarrow[n\to\infty]{a.s.} \varphi( \mathcal{T}_\infty) \end{eqnarray} which combined with the results of Section \ref{sec:infty} finishes the proof of Theorem \ref{thm:main}.

\paragraph{Critical and supercritical cases.}  A moment's thought shows that we always have
 \begin{eqnarray}\label{eq:fatou} \liminf_{n \to \infty}\varphi ( \mathcal{T}_n)  \geq \varphi( \mathcal{T}_\infty),   \quad a.s.\end{eqnarray}
but that the inequality may be strict\footnote{Consider e.g.~a line segment of length $n$ with $2n$ cars arriving on top, hence a flux $n$ at the root. This converges towards the empty half-line with zero flux.}. Anyway, in the critical and supercritical case since $ \varphi( \mathcal{T}_\infty) = \infty$ by Section \ref{sec:infty}, we always have \eqref{eq:goalsub} as desired.

\paragraph{Subcritical case.} We suppose here that we are in the subcritical regime. The convergence \eqref{eq:goalsub} is granted provided that we can show that the parking process is local, i.e.~that no car contributing to $ \varphi( \mathcal{T}_n)$ comes from far away. To this end, let $ \mathcal{V}_n$ be a uniform vertex of $ \mathcal{T}_n$ and for $M \geq 1$ denote the event 
$$ \mathrm{Good}(M) = \{\exists x \mbox{ ancestor of } \mathcal{V}_n \mbox{ at height }\leq M \mbox{ which contains no car after parking}\}.$$

\begin{lemma}[Locality of the parking process in the subcritical phase] Suppose that $(\mu, \nu)$ is subcritical. For any $ \varepsilon>0$, we can find $M_0$ so that for all $M \geq M_{0}$ and all $n$ large enough we have
$$   \mathbb{P}( \mathcal{T}_n \in  \mathrm{Good}(M)) \geq 1- \varepsilon.$$
\end{lemma}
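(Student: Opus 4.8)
The plan is to exhibit, with high probability, an empty ``barrier'' among the low ancestors of $\mathcal V_n$ that no descending car can cross, and to locate it by transferring the corresponding statement from the infinite tree $\mathcal T_\infty$. First I record the random-walk reading of the parking on $\mathcal T_\infty$ from Section~\ref{sec:infty}: writing $\mathrm{flux}_h$ for the number of cars descending from $S_h$ to $S_{h-1}$, one has $\mathrm{flux}_{h-1}=(\mathrm{flux}_h+Z_{h-1}-1)\vee 0$, so that the spine vertex $S_j$ is empty after parking if and only if $Z_j=0$ and $\mathrm{flux}_{j+1}=0$. These two events depend on disjoint blocks of the i.i.d.\ sequence $(Z_i)$ and are therefore independent. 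Since $m\le 1$ and $\mu\ne\delta_1$ force $\mu(0)>0$ we get $\mathbb P(Z=0)>0$, and in the subcritical regime the walk with increments $Z-1$ has negative drift, so $\mathbb P(\mathrm{flux}_{j+1}=0)>0$. By the ergodic theorem applied to the shift on $(Z_i)_{i\ge 0}$, almost surely infinitely many $S_j$ are empty; in particular the first empty spine vertex is finite and $\mathbb P(\exists\, j\le M:\ S_j\text{ empty in }\mathcal T_\infty)\to 1$ as $M\to\infty$.

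The delicate point is that emptiness of $S_j$ is \emph{not} a local event, since it involves the flux coming from arbitrarily high up. I would handle this by monotonicity combined with the rough control of Lemma~\ref{lem:rough}. Fix $\varepsilon>0$ and, using the previous paragraph, choose $M$ so large that $\mathcal T_\infty$ has an empty vertex among $S_0,\dots,S_M$ with probability $\ge 1-\varepsilon/4$. Let $A_{M,K}$ be the event that there is an empty vertex among $S_0,\dots,S_M$ \emph{even when} an extra flux $K$ is injected at the top vertex $S_{\lfloor H_n/2\rfloor}$ of the bottom piece and the whole segment is re-parked downward. This event depends only on $\mathrm{Down}(\mathcal T_\infty,S_{H_n})$, and it is a decreasing function of the injected flux. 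Because the descending recursion has negative drift, an injected flux of any fixed size is absorbed after $O(K)$ levels, after which the low part parks as a fresh copy of $\mathcal T_\infty$; as $\lfloor H_n/2\rfloor\to\infty$ with $n$, the probability of $A_{M,K}$ under $\mathcal T_\infty$ therefore stays $\ge 1-\varepsilon/2$ for every fixed $K$. Applying the $\mathrm{Down}$-part half of Lemma~\ref{lem:rough} to the complementary bad event transfers this bound: $\mathrm{Down}(\mathcal T_n,\mathcal V_n)\in A_{M,K}$ with probability $\ge 1-\varepsilon/2$ for all $n$ large.

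The main obstacle is to guarantee that in $\mathcal T_n$ the flux actually descending into the top of the bottom piece (the ancestor $y$ of $\mathcal V_n$ at height $\lfloor H_n/2\rfloor$) does not exceed $K$, precisely because parking is non-local and the subtree above $y$ is macroscopic. I would bound this flux by $\varphi(\mathrm{Top}(\mathcal T_n,y))+1$ and show the latter is tight uniformly in $n$. The quantitative input is the subcriticality of the mean flux: by Proposition~\ref{prop:meanflux}, $\mathbb E[\varphi(\mathcal T)]=\varPhi(1)<\infty$ when $\Theta>0$, and the fringe convergence of Lemma~\ref{prop:janson-fringe} gives $\frac1n\sum_{x}\varphi(\mathrm{Top}(\mathcal T_n,x))\to\mathbb E[\varphi(\mathcal T)]$, so a uniformly chosen vertex receives only $O(1)$ descending flux in expectation. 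To upgrade this from a uniform vertex to the size-biased ancestor $y$, I would use the $\mathrm{Up}$-part half of Lemma~\ref{lem:rough} to compare the labelled region above $y$ with its counterpart in $\mathcal T_\infty$, where the descending flux is almost surely finite in the subcritical regime (its law is governed by the supremum in \eqref{eq:supZ}), and monotonicity of the flux in the car arrivals to dominate the $\mathcal T_n$ quantity by the $\mathcal T_\infty$ one up to the rough-control error. This yields $\mathbb P(\text{flux into }y>K)\le\varepsilon/2$ for $K$ and $n$ large.

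Combining the two estimates finishes the proof. On the complement of the event $\{\text{flux into }y>K\}$, the monotonicity built into $A_{M,K}$ shows that if $\mathrm{Down}(\mathcal T_n,\mathcal V_n)\in A_{M,K}$ then the \emph{actual} parking on $\mathcal T_n$ leaves an empty vertex among the ancestors of $\mathcal V_n$ at height $\le M$, i.e.\ $\mathcal T_n\in\mathrm{Good}(M)$. Hence
$$\mathbb P\big(\mathcal T_n\notin\mathrm{Good}(M)\big)\le \mathbb P(\text{flux into }y>K)+\mathbb P\big(\mathrm{Down}(\mathcal T_n,\mathcal V_n)\notin A_{M,K}\big)\le \varepsilon,$$
for all $M\ge M_0:=M$ and all $n$ large, as required. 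The same barrier then blocks any car originating above height $M$ and, together with \eqref{eq:fatou}, upgrades the one-sided bound to the almost sure convergence \eqref{eq:goalsub} in the subcritical phase.
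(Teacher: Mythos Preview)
Your overall architecture---decompose along the spine, control the flux that descends into the bottom piece, and show that the bottom piece absorbs it while keeping a low empty spot---is exactly the paper's. The gap is in your third paragraph, where you try to bound the flux descending from the region above $y$ into $\mathrm{Down}(\mathcal T_n,\mathcal V_n)$.

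Neither of your two ingredients does what you need. First, the fringe-average statement $\frac{1}{n}\sum_x \varphi(\mathrm{Top}(\mathcal T_n,x))\to\mathbb E[\varphi(\mathcal T)]$ (which, incidentally, is not what Lemma~\ref{prop:janson-fringe} asserts and would itself require a uniform-integrability argument since $\varphi$ is unbounded) only controls a \emph{uniform} vertex; the ancestor $y$ at height $\lfloor H_n/2\rfloor$ is heavily size-biased and its subtree $\mathrm{Top}(\mathcal T_n,y)$ typically has order $n$ vertices, so this says nothing about it. Second, the $\mathrm{Up}$-half of Lemma~\ref{lem:rough} compares $\mathrm{Up}(\mathcal T_n,\mathcal V_n)=\mathrm{Top}(\mathrm{Pruned}(\mathcal T_n,\mathcal V_n),y)$ with its $\mathcal T_\infty$ counterpart, \emph{not} the full ``region above $y$''. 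The latter is $\mathrm{Top}(\mathcal T_n,y)$, which also contains the fringe piece $\mathrm{Top}(\mathcal T_n,\mathcal V_n)$; there is no version of Lemma~\ref{lem:rough} for that combined object, and a direct absolute-continuity estimate would fail for the same reason the lemma is not stated for it.

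The paper closes exactly this gap by a three-way (not two-way) split: it bounds $\varphi(\mathrm{Top}_n)\le M$ via Janson's fringe convergence (since $\mathrm{Top}_n$ converges in law to an unconditioned $\mathcal T$) and, separately, $\varphi(\mathrm{Up}_n)\le M$ via Lemma~\ref{lem:rough} applied to the event $\{\varphi>M\}$. The combined descending flux into $\mathrm{Down}_n$ is then at most a fixed multiple of $M$, and the analogue of your event $A_{M,K}$ is simply ``$\ge 3M$ empty spine spots in $\mathrm{Down}_n$, one at height $\le M$'', again handled by Lemma~\ref{lem:rough}. In short: keep your plan, but split the flux from above into $\mathrm{Top}_n$ and $\mathrm{Up}_n$ and control each with the appropriate tool; your attempt to treat them in one stroke is where the argument breaks.
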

\begin{proof} Fix $ \varepsilon>0$. Recall the decomposition of $  \mathcal{T}_n$ into three parts $ \mathrm{Top}(  \mathcal{T}_{n},  \mathcal{V}_{n})$, $\mathrm{Up}(  \mathcal{T}_{n},  \mathcal{V}_{n})$ and $\mathrm{Down}(\mathcal{T}_{n},  \mathcal{V}_{n})$ from Section \ref{sec:decomposition}. We denote these parts labeled by their associated car arrivals (on the vertices common to two parts, we duplicate the car arrivals) by $ \mathrm{Top}_n, \mathrm{Up}_n$ and $ \mathrm{Down}_n$ to simplify notation. We claim that the event $ \mathrm{Good}(M)$ happens for $ \mathcal{T}_n$ if {after proceeding to the parking separately in each part we have}
\begin{itemize}
\item The flux at the root of $ \mathrm{Top}_n$ is less than $M$,
\item The flux at the root of $ \mathrm{Up}_n$ is less than $M$,
\item There are more than $3M$ empty spots on the ``spine" of $\mathrm{Down}_n$ and at least one of this spot is at height less than $M$.
\end{itemize}

\begin{figure}[!h]
 \begin{center}
 \includegraphics[width=14cm]{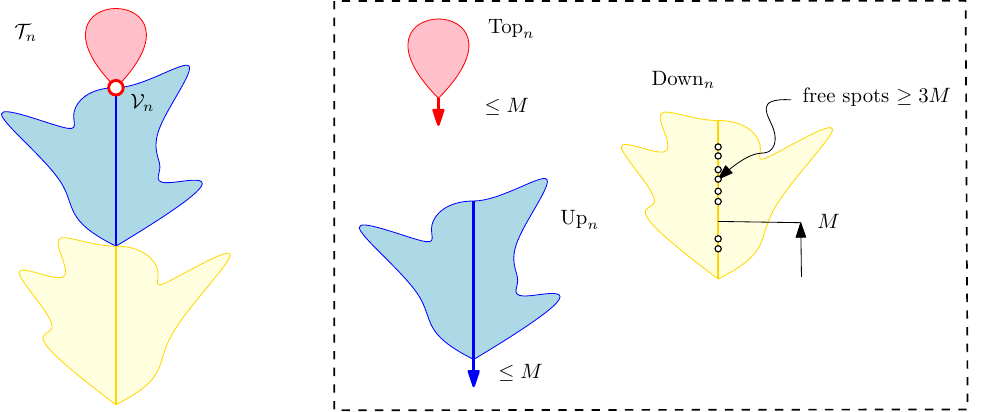}
 \caption{On the right, the three events ensuring $\mathrm{Good}(M)$ happens are realized for each of the three parts $ \mathrm{Top}_n$, $ \mathrm{Up}_n$ and $ \mathrm{Down}_n$. After gluing back the pieces together and parking, the flux coming down from $ \mathrm{Top}_n$ and $ \mathrm{Up}_n$ will be absorbed by the empty spots on the spine of $ \mathrm{Down}_n$, and an empty spot will remain on the spine of $ \mathrm{Down}_n$ at height less than $M$.}
 \end{center}
 \end{figure}

We now use our controls separately on each part to ensure that the \emph{complementary} of each of the previous three events has probability at most $\varepsilon/3$ when $M$ is large enough. 
For $ \mathrm{Top}_n$. This tree converges in distribution towards an (a.s. finite) unconditioned labeled $\nu$-Galton--Watson tree $\mathcal{T}$, \cite[Theorem 1.3]{JA16},
so defining $A=A_M= \{ \varphi > M \}$, we may choose $M$ large enough so that 
$\P(\mathcal{T} \in A_M) < \varepsilon/3$, which implies that for $n$ large enough, $\P(\mathrm{Top}_{n} \in A_M) \leq \varepsilon /3$. 
For $\mathrm{Up}_n$. By subcriticality, the flux is bounded in $\mathcal T_{\infty}$, see Section  \ref{sec:infty}, hence again there is $M$ large enough so that for $n$ large enough, if $H_n$ denotes the height of the vertex $\mathcal{V}_n$, $\mathbb{P}\left(
\mathrm{Up}(  \mathcal{T}_{\infty}, S_{H_{n}}) \in A_M \right) \leq \delta$ for the value of $\delta$ linked with $\varepsilon/3$ in Lemma \ref{lem:rough}, and therefore, for the same values of $n$,  we have $\mathbb{P}\left(\mathrm{Up}_n \in A_M \right) \leq \varepsilon/3$. 
For  $\mathrm{Down}_n$. It follows from Section \ref{sec:infty} (using the same notation) that in $ \mathcal{T}_{\infty}$, the $h$-th vertex on the spine $S_{h}$ is a free spot after parking if and only if we have 
$$ \sup_{i \geq 0} (Z_{h}+ \dots + Z_{h+i} - (i+1)) = -1.$$
Since the random walk with increments $(Z-1)$ has a strictly negative drift in the subcritical case, it follows from standard consideration on random walks  and the fact that $H_{n} \to \infty$ in probability that the event 
$$ \{ \exists i \leq M: S_{i} \mbox{ is a free spot}\} \cap \{ \# \{i  \leq \lfloor H_{n}/2 \rfloor  : S_{i} \mbox{ is a free spot}\} \geq 3 M\}$$
has probability at least $1- \delta$, provided that $M$ is large enough. We can then argue as above and apply Lemma \ref{lem:rough} to deduce that the third item on $ \mathrm{Down}_{n}$ holds with probability  asymptotically larger than $1 - \varepsilon/3$. 
\end{proof}
From the last lemma, the convergence $ \mathcal{T}_n \to \mathcal{T}_\infty$ as labeled trees, and the fact that $ \mathcal{T}_\infty$ has a single end, it is easy to see that we indeed have $ \varphi(\mathcal{T}_n) \to \varphi(\mathcal{T}_\infty)$ a.s. 

\section{Comments and extensions}
We mention here a few possible developments that we hope to pursue in the future.
\paragraph{On the critical case.} As mentioned in the introduction, probably the most interesting question is to study the critical case $\Theta=0$. 
We tackle this problem in a forthcoming work in the case of plane trees and study the scaling limit of the renormalized flux on $ \mathcal{T}_n$. In  ``generic" situations, the flux is of order $n^{1/3}$ on $ \mathcal{T}_n$ and the components of parked vertices form a stable tree of parameter $3/2$. The components themselves are described by the growth-fragmentation trees considered in \cite{BCK18} in the context of random planar maps.  We also find a one-parameter family of possible scaling limits when the car arrivals have a ``heavy tail" $\mu([k,\infty)) \sim c k^{-\alpha}$ with $\alpha \in (2,3)$, which is again linked to the growth-fragmentation trees considered in \cite{BBCK18}.
\paragraph{Sharpness of the phase transition.} It is natural to expect that the phase transition for the parking is ``sharp" in the sense that many observables undergo a drastic change when going from the subcritical to the supercritical regime : this has been verified by Contat \cite{C21}, who shows that  $\mathbb{P}( \varphi( \mathcal{T}_{n}) =0)$ decays exponentially fast in the supercritical regime, whereas  in the subcritical regime, $ \varphi( \mathcal{T})$ has an exponential tail in the linear scale, see Theorem 2 of \cite{C21} for a precise statement.
\paragraph{Near-critical dynamics.} Also, in the first line of the table in Theorem \ref{thm:main}, one could approach the critical case by letting $\Theta=\Theta(n)$ approach 1 with $n$ while looking at $\mathcal T_n$ and try to delimit the regimes where $\varphi(\mathcal T_n)/ \E[\varphi(\mathcal T_n)]$ has a random limit (the critical window that extends the critical regime)
%$\Theta=0$) 
or a non random limit (the so called near-critical regimes). We hope to be able to study the dynamical scaling limits of the parking process in the critical window and compare it with the multiplicative coalescent which appears when studying the creation of the giant component in Erd\"os--R\'enyi random graphs. A preprint by Contat and the first author \cite{CC20} carries out this project in the case of Cayley trees.

%\newpage %% AUTHOR: please comment out this line.  It serves only
%%%   to demonstrate both types of header
%
% line in daj-template.pdf

%\section{Expansion estimates}
%
% More of the body of your paper goes here~\cite{bergelson-johnson-moreira}.
%
%%%% AUTHOR: optional appendix here
%\appendix %% you may comment this out if no Appendix
%\section*{Appendix}
%\section{Improving the constants}
%Material is placed here as needed.

%%% AUTHOR: optional acknowledgments here
\section*{Acknowledgments} %%  you may comment this out if no Ackno
%\textbf{Acknowledgments.} 
We thank Bastien Mallein and Christina Goldschmidt for an interesting discussion. Also, we are indebted to Alice Contat for spotting a typo in the proof of Proposition \ref{prop:meanflux}, and,  together with Linxiao Chen, for pointing the necessity of the assumption $m \leq 1$ in Theorem \ref{thm:main}.  We are grateful to the referees for comments.

%%% AUTHOR:
%%% Bibliography goes here. Note that the arXiv cannot process bibtex
%%% or biber bibliographies.  Example of acceptable bibliograpy format:
\bibliographystyle{amsplain}

\begin{thebibliography}{99}


\bibitem{AD14}
Romain Abraham and Jean-Fran{c}ois Delmas.
\newblock {L}ocal limits of conditioned {G}alton-{W}atson trees: the infinite
  spine case.
\newblock {\em Electron. J. Probab.}, 19:19 pp., 2014.

\bibitem{AB05}
David~J. Aldous and Antar Bandyopadhyay.
\newblock A survey of max-type recursive distributional equations.
\newblock {\em Ann. Appl. Probab.}, 15(2):1047--1110, 05 2005.

\bibitem{BBCK18}
Jean Bertoin, Timothy Budd, Nicolas Curien, and Igor Kortchemski.
\newblock Martingales in self-similar growth-fragmentations and their
  connections with random planar maps.
\newblock {\em Probability Theory and Related Fields}, 172(3):663--724, Dec
  2018.

\bibitem{BCK18}
Jean Bertoin, Nicolas Curien, and Igor Kortchemski.
\newblock Random planar maps and growth-fragmentations.
\newblock {\em Ann. Probab.}, 46(1):207--260, 01 2018.

\bibitem{CG19}
Qizhao Chen and Christina Goldschmidt.
\newblock Parking on a random rooted plane tree.
\newblock {\em ArXiv e-prints}, 2019.




\bibitem{C21}
Alice Contat.
\newblock Sharpness of the phase transition for parking on random trees.
\newblock {\em Random Structures \& Algorithms}, to appear.



\bibitem{CC20}
Alice Contat and Nicolas Curien.
\newblock Parking on Cayley trees \& Frozen Erd\"os-R\'enyi.
\newblock {\em ArXiv e-prints}, 2020.



\bibitem{GP19}
Christina Goldschmidt and Michał Przykucki.
\newblock Parking on a random tree.
\newblock {\em Combinatorics, Probability and Computing}, 28(1):23–45, 2019.

\bibitem{HMP19}
Yueyun Hu, Bastien Mallein, and Michel Pain.
\newblock {A}n exactly solvable continuous-time {D}errida--{R}etaux model.
\newblock {\em Communications in Mathematical Physics}, May 2019.

\bibitem{JA16}
Svante Janson.
\newblock Asymptotic normality of fringe subtrees and additive functionals in
  conditioned {G}alton–{W}atson trees.
\newblock {\em Random Structures \& Algorithms}, 48(1):57--101, 2016.

\bibitem{JO18}
Owen~D. Jones.
\newblock Runoff on rooted trees.
\newblock {\em Journal of Applied Probability}, 56(4):1065--1085, 2019.

\bibitem{KR18}
Igor Kortchemski and Loïc Richier.
\newblock The boundary of random planar maps via looptrees.
\newblock {\em Annales de la Facult{\'e} des sciences de Toulouse: Math{\'e}matiques}, 29(2):391--430, 2020.



\bibitem{LaP16}
Marie-Louise Lackner and Alois Panholzer.
\newblock Parking functions for mappings.
\newblock {\em Journal of Combinatorial Theory, Series A}, 142:1 -- 28, 2016.


\bibitem{LG05}
Jean-Fran\c cois Le Gall.
\newblock Random trees and applications.
\newblock {\em Probability Surveys}, (2):245-- 311, 2005.


\bibitem{LP16}
Russell Lyons and Yuval Peres.
\newblock {\em Probability on Trees and Networks}, volume~42 of {\em Cambridge
  Series in Statistical and Probabilistic Mathematics}.
\newblock Cambridge University Press, New York, 2016.
\newblock Available at \url{http://pages.iu.edu/~rdlyons/}.

\bibitem{P20}
Alois Panholzer.
\newblock Parking function varieties for combinatorial tree models.
\newblock {\em ArXiv e-prints}, 2020.

%
%
%
%\bibitem{bergelson-johnson-moreira}
%Vitaly Bergelson, John H. Johnson Jr., and Joel Moreira.
%\newblock New polynomial and multidimensional extensions of classical partition
%  results.
%\newblock 2015, arXiv:1501.02408.
%
%\bibitem{cilleruelo}
%Javier Cilleruelo.
%\newblock Combinatorial problems in finite fields and {S}idon sets.
%\newblock {\em Combinatorica}, 32(5):497--511, 2012.

\end{thebibliography}

%% AUTHOR: You can generate such a bibliography from a .bib file by 
%% running pdflatex/bibtex/pdflatex/pdflatex and then pasting the .bbl file
%% between \begin{thebibliography} and \end{bibliography}

%%% AUTHOR: Include a short description of each author following the
%%% structure below. Use the same short tags used previously.  
%%% Use \imageat{} and \imagedot{} instead of "@" and "." in
%%% email addresses-this replaces the symbols with graphics to avoid 
%%% e-mail address harvesting from the .pdf file
\begin{dajauthors}
\begin{authorinfo}[nicolas]
  Nicolas Curien\\
 Université Paris-Saclay, CNRS\\
  Laboratoire de mathématiques d'Orsay \\
  91405, Orsay, France.\\
  nicolas\imagedot{}curien\imageat{}gmail\imagedot{}com \\
  \url{https://www.imo.universite-paris-saclay.fr/~curien/}
\end{authorinfo}
\begin{authorinfo}[olivier]
  Olivier H\'enard\\
  Université Paris-Saclay, CNRS\\
  Laboratoire de mathématiques d'Orsay \\
  91405, Orsay, France.\\
  olivier\imagedot{}henard\imageat{}universite-paris-saclay\imagedot{}fr \\
  \url{https://www.imo.universite-paris-saclay.fr/~henard/}
\end{authorinfo}
\end{dajauthors}

\end{document}